\newtheorem{theorem}{Theorem}[section]
\newtheorem{corollary}[theorem]{Corollary}
\newtheorem{lemma}[theorem]{Lemma}
\newtheorem{proposition}[theorem]{Proposition}
\newtheorem{remark}[theorem]{Remark}
\newtheorem{question}[theorem]{Question}
\newcommand{\vol}{\mathrm{vol}}
\newcommand{\lmod}{\! \setminus \!}
\DeclareMathOperator{\Vol}{Vol}
\DeclareMathOperator{\SO}{SO}
\DeclareMathOperator{\OO}{O}
\DeclareMathOperator{\diam}{diam}
\DeclareMathOperator{\stab}{stab}
\DeclareMathOperator{\PSL}{PSL}
\DeclareMathOperator{\Dom}{Dom}
\title{Strict contractions and exotic $\SO_0(d,1)$ quotients.}
\author{Grant S. Lakeland, Christopher J. Leininger}
\begin{document}

\maketitle

\begin{abstract} 
For $d \leq 4$, we describe an elementary construction of nonzero degree, strict contractions between closed, oriented hyperbolic $d$--orbifolds.  Appealing to work of Gu\'eritaud--Kassel and Tholozan, these examples determine exotic quotients of $\SO_0(d,1)$.
\end{abstract}

\section{Introduction} \label{S:introduction}

The Schwarz-Pick Theorem (see \cite{Pick,Osserman}) implies that a holomorphic branched covering map between closed hyperbolic surfaces is $K$--Lipschitz for some $K \leq 1$, with $K < 1$ if the branch locus is nonempty.  For any branched covering between closed surfaces and any choice of hyperbolic metric on the target, uniformizing the pulled-back complex structure produces a hyperbolic metric on the domain to which the Schwarz-Pick theorem applies, providing an abundance of positive degree, strict contractions between compact hyperbolic surfaces.  The following question is motivated by a question of Tholozan~\cite{TholozanVol} (see \S~\ref{S:strictly dominating intro}).

\begin{question} \label{Q:dominations exist?} Can one construct nonzero degree strict contractions between hyperbolic $d$--manifolds/orbifolds for $d \geq 3$?
\end{question}

Ian Agol described an example in dimension $3$ (see \cite{AgolOverflow} and Theorem~\ref{T:Agol} below).  Here we describe a construction for infinite families of degree $1$, $K$--Lipschitz maps between hyperbolic $d$--orbifolds, for $d \leq 4$, in which $K < 1$ can be made arbitrarily small, as well as examples for which the optimal $K$ is arbitrarily close to $1$.  Given $\bar \Gamma \leq \OO_0(d,1)$, a cocompact right-angled hyperbolic reflection group, we call the index two, orientation preserving subgroup $\Gamma = \bar \Gamma \cap \SO_0(d,1)$ the {\em right-angled rotation (sub)group} (see \S\ref{S:RA polyhedra}).  For any discrete subgroup $\Gamma < \SO_0(d,1)$, we let $M_\Gamma = \Gamma \!\! \setminus \mathbb H^d$. 

\begin{theorem} \label{T:main RACG} For any two cocompact right-angled rotation groups $\Gamma,\Gamma' \leq \SO_0(d,1)$, with $d \leq 4$, there exist finite index subgroups $\{\Gamma_n \leq \Gamma\}_{n=1}^\infty$ and degree $1$, $K_n$--lipschitz maps
\[ f_n \colon M_{\Gamma_n} \to M_{\Gamma'}\]
with $K_n \to 0$ as $n \to \infty$.
\end{theorem}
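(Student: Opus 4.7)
The approach is to construct the finite covers $M_{\Gamma_n}$ and the maps $f_n$ by hand, exploiting the combinatorial flexibility of right-angled Coxeter groups in dimensions $d \leq 4$.  Let $P, P' \subset \mathbb{H}^d$ denote compact right-angled fundamental polytopes for $\bar \Gamma$ and $\bar \Gamma'$ respectively; their existence in the dimensions at hand is essential.

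Using the abundance of finite quotients of $\bar\Gamma$---for instance via abelianizations $\bar\Gamma \to (\mathbb{Z}/2)^F$ indexed by the codimension-one faces of $P$, or finer face colorings---choose finite-index subgroups $\Gamma_n \leq \Gamma$ so that $M_{\Gamma_n}$ has injectivity radius tending to infinity at some chosen basepoint.  Then construct $f_n$ in three pieces: a ``core'' region $C_n \subset M_{\Gamma_n}$ on which $f_n$ is a degree-$1$, strictly contracting map onto $M_{\Gamma'}$ (essentially the right-angled analog of Agol's construction, Theorem~\ref{T:Agol}); a large ``bulk'' region sent by a constant map to a chosen point $p' \in M_{\Gamma'}$; and a ``collar'' of width $w_n \to \infty$ interpolating between the core map and the constant via the geodesic (straight-line) homotopy in the $\mathrm{CAT}(-1)$ universal cover of $M_{\Gamma'}$.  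The collar map has Lipschitz constant of order $\diam(M_{\Gamma'})/w_n \to 0$, the constant map on the bulk is trivially Lipschitz, and the degree is exactly $1$, supplied entirely by the core.

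The principal obstacle is the core construction: producing a degree-$1$ map from a region of $M_{\Gamma_n}$ onto $M_{\Gamma'}$ with arbitrarily small Lipschitz constant $K_n$.  Since a degree-$1$ map must geometrically cover its target, the core must have diameter at least $\diam(M_{\Gamma'})/K_n$; this is made possible by the growing injectivity radius of $M_{\Gamma_n}$, but the actual construction requires gluing copies of $P$ inside $M_{\Gamma_n}$ in a pattern adapted to a fundamental domain of $\Gamma'$, and exploiting the alternating orientations arising from reflections across faces of $P$ so that most local contributions to the degree cancel and only a net $+1$ remains.  Verifying that this right-angled generalization of Agol's three-dimensional example produces the desired contracting core, with boundary behavior compatible with the collar, is the heart of the proof.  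The restriction $d \leq 4$ enters both through the existence of compact right-angled hyperbolic polytopes $P$ and $P'$ and through the combinatorial matching arguments needed to construct the core and glue it to the collar and bulk.
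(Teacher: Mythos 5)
Your proposal shares the high-level intuition with the paper (pass to finite-index subgroups whose fundamental domains grow, contract onto a fixed $M_{\Gamma'}$), but it diverges at the decisive step and leaves the decisive step as an acknowledged gap.

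The ``core + collar + bulk'' scaffolding does not resolve the main difficulty, and the collar is in fact a red herring: the collar's Lipschitz constant is bounded below by the Lipschitz constant of the core map restricted to $\partial C_n$, so the collar and bulk cannot improve the Lipschitz constant beyond what the core already achieves. Hence the entire burden of making $K_n\to 0$ falls on the ``core construction,'' which you describe only as ``the right-angled analog of Agol's construction'' with ``alternating orientations'' so that ``most local contributions to the degree cancel and only a net $+1$ remains.'' This does not match Agol's example (which is a single linear map in the Klein model between two nested tetrahedra sharing a vertex, with no cancellations whatsoever), nor does it match the paper's construction, and it is not clear it would produce a well-defined degree-one map at all. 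There is also no mechanism proposed for controlling the boundary behavior of the core so that it glues to the collar.

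The paper instead builds the map globally and equivariantly, avoiding any collar or pinching. Fix $x\in \mathrm{int}(Q)\cap\mathrm{int}(Q')$, choose $\delta$ with $N_1(Q')\subset B_\delta(x)$, and for $r=\delta/K$ take as fundamental domain the $Q$--convex hull $Q_r=\mathcal H_Q(B_r(x))$, which is again a compact right-angled polyhedron and is a fundamental domain for a finite-index reflection subgroup $\bar\Gamma_r\leq\bar\Gamma$ (Lemma~\ref{L:finite Q-hull}). The key map is $\widetilde f^\circ=\pi'\circ\psi_{\delta,r}\circ\pi$: closest-point projection onto $B_r$, then the spherical-coordinate radial contraction $B_r\to B_\delta$, then closest-point projection onto $Q'$. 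The radial contraction is $K=\delta/r$--Lipschitz globally (Lemma~\ref{L:spherical contraction}), but \emph{exponentially} contracting ($e^{\delta-r}$) in the normal-sphere directions. Since every $(d-1)$--face of $Q_r$ lies between $\partial B_r$ and $\partial B_{r+R}$ and hence has diameter at most $D=2R+4$ (Lemma~\ref{L:bounded diameter faces}), the exponential contraction sends each face to a set of diameter less than $L$, the universal Lebesgue number of the cover $\{\mathcal H_F^1\}$ of $\partial N_1(Q')$ (Lemma~\ref{L:Lebesgue number}). Consequently each face of $Q_r$ maps into a \emph{single} face of $Q'$, which defines a Coxeter-group homomorphism $\rho:\bar\Gamma_r\to\bar\Gamma'$ on generators (Lemma~\ref{L:homeo defining homo}), and $\widetilde f^\circ$ extends $\rho$-equivariantly to a $K$--Lipschitz $\widetilde f:\mathbb H^d\to\mathbb H^d$ descending to a degree-one $K$--Lipschitz map $M_{\Gamma_r}\to M_{\Gamma'}$ (Theorem~\ref{T:main 1 simpler}). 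As $K\to 0$, $r\to\infty$ and one obtains the sequence $\Gamma_n=\Gamma_{r_n}$. The two ingredients missing from your proposal are precisely the $Q$-convex hull (which replaces ad hoc finite quotients with right-angled polyhedral fundamental domains) and the exponential-contraction-versus-Lebesgue-number argument, which is what actually produces the Coxeter homomorphism and hence the global degree-one map without any collar, bulk, or degree cancellation.
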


To produce examples where the optimal constant is arbitrarily close to $1$, we must also take finite index subgroups of the target.

\begin{theorem} \label{T:main RACG2} For any two cocompact right-angled rotation groups $\Gamma,\Gamma' \leq \SO_0(d,1)$, with $d \leq 4$, there exist finite index subgroups $\{\Gamma_n \leq \Gamma\}_{n=1}^\infty$, $\{\Gamma_n' \leq \Gamma'\}_{n=1}^\infty$, and degree $1$, $K_n$--Lipschitz maps
\[ f_n \colon M_{\Gamma_n} \to M_{\Gamma_n'} \]
with $K_n < 1$ for all $n \geq 1$.  Furthermore, for all $n$ there exists a hyperbolic element $\gamma_n \in \Gamma_n$ such that the ratio of translation lengths $\frac{\ell(f_{n*}(\gamma_n))}{\ell(\gamma_n)}$ tends to $1$ as $n$ tends to infinity.
\end{theorem}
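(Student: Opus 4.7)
\emph{Plan.} The strategy is to adapt Theorem~\ref{T:main RACG}'s construction to a regime where the covers of both $\Gamma$ and $\Gamma'$ grow with matching volumes, producing near-isometric strict contractions. The starting point is Theorem~\ref{T:main RACG} itself: fix a finite index $H \leq \Gamma$ and a degree-$1$, $K$-Lipschitz map $g \colon M_H \to M_{\Gamma'}$ with $K<1$. Since $g$ has degree $1$ between closed orientable orbifolds, $g_* \colon H \to \Gamma'$ is a surjective, strictly dominating representation, and this will play the role of a ``collapse gadget'' applied locally below.

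For each $n$, I would choose finite index subgroups $\Gamma_n \leq \Gamma$ and $\Gamma_n' \leq \Gamma'$ with $\vol(M_{\Gamma_n})/\vol(M_{\Gamma_n'}) = 1 + \epsilon_n$ and $\epsilon_n \to 0^+$. In the cocompact right-angled setting of $d \leq 4$ such indices can be arranged by Dirichlet approximation of $\vol(M_\Gamma)/\vol(M_{\Gamma'})$, with the covers realized by passing to congruence or coloring subgroups of the reflection groups. Volume comparison then forces any degree-$1$, $K_n$-Lipschitz map between these covers to satisfy $K_n \geq (1+\epsilon_n)^{-1/d}$, and this lower bound tends to $1$ from below.

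The map $f_n$ itself is built in two parts. Select a large submanifold $V_n \subset M_{\Gamma_n}$ and map it near-isometrically onto most of $M_{\Gamma_n'}$ with Lipschitz constant just above $(1+\epsilon_n)^{-1/d}$; on the ``excess'' region $M_{\Gamma_n} \setminus V_n$, of volume $\epsilon_n \vol(M_{\Gamma_n'})$, apply a localized version of Theorem~\ref{T:main RACG}'s collapse gadget to absorb the excess while preserving degree $1$. Taking $\bar\gamma_n \subset V_n$ to be a closed geodesic of large length on which $f_n$ nearly achieves its Lipschitz constant then produces a hyperbolic $\gamma_n \in \Gamma_n$ with $\ell(f_{n*}(\gamma_n))/\ell(\gamma_n)$ approaching $K_n$, and hence $1$.

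The main obstacle is the construction of $f_n$ in the previous paragraph: arranging the submanifold $V_n$ to match geometrically between $M_{\Gamma_n}$ and $M_{\Gamma_n'}$ --- which are not commensurable in general --- requires a hybrid cut-and-paste on right-angled polytope tiles from both $\Gamma$ and $\Gamma'$, glued along shared face complexes, and then interpolated so that the optimal Lipschitz constant actually realizes the volume lower bound. This is precisely where the restriction $d \leq 4$ enters: cocompact right-angled reflection groups in $\SO_0(d,1)$ exist only in these dimensions, and their polytope combinatorics supply the flexibility needed to execute the cut-and-paste, which is unavailable in higher dimensions.
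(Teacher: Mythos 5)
Your proposal takes a genuinely different route from the paper's, and the route has a gap that does not appear to be repairable as stated.  The crux of your construction is the step ``select a large submanifold $V_n \subset M_{\Gamma_n}$ and map it near-isometrically onto most of $M_{\Gamma_n'}$.''  You flag this as the main obstacle, but the proposed remedy---a hybrid cut-and-paste along shared face complexes---is not an argument: $M_{\Gamma_n}$ and $M_{\Gamma_n'}$ are tiled by translates of different polytopes $Q$ and $Q'$ with no shared combinatorial structure, and in dimension $d\geq 3$ there is no reason for two non-commensurable hyperbolic orbifolds of nearly equal volume to contain large near-isometric pieces.  Volume matching gives you the \emph{necessary} condition $K_n \geq (\vol(M_{\Gamma_n'})/\vol(M_{\Gamma_n}))^{1/d}$, but that bound is vacuous for the existence claim; it only shows you cannot do better, not that a near-isometric map exists.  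The final paragraph's assertion that the restriction $d\leq 4$ supplies ``polytope combinatorics'' enabling the cut-and-paste also misidentifies the role of that hypothesis: the paper is explicit that $d\leq 4$ is needed only because cocompact right-angled reflection groups exist only in these dimensions; no additional low-dimensional flexibility is used.

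The paper's actual proof never arranges matching volumes.  It runs the same $Q$-convex-hull machinery as Theorem~\ref{T:main 1 simpler} but replaces the fixed target $Q'$ with the growing hull $Q'_s = \mathcal H_{Q'}(B_s(x))$, and sets $\delta = \delta(s) = 1 + s + R$ with $K = K(s)$ determined by the equation $\delta(1-K)/K = \log(D/L)$, so that $K(s)\to 1$ as $s\to\infty$.  Because $N_1(Q'_s)\subset B_\delta(x)$ and $\delta-s$ is a bounded constant, the hypotheses of Theorem~\ref{T:main 1 simpler} hold with $K$ close to $1$, while the resulting $Q_r = \mathcal H_Q(B_r(x))$ with $r=\delta/K$ sits in $B_{r+R}(x)$, and $r/s\to 1$.  (In fact the volume ratio $\vol(M_{\Gamma_r})/\vol(M_{\Gamma'_s})$ is asymptotic to $e^{(d-1)(1+R)}$, a constant strictly larger than $1$---so the path you propose, of driving the volume ratio to $1$, is not what happens.)  The translation-length witness is then elementary: pick a geodesic through $x$, let $F'_{s,1},F'_{s,2}$ be two faces of $Q'_s$ it crosses, and let $F_{r,1},F_{r,2}$ be the faces of $Q_r$ that $\rho_s$ maps to them.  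The element $\gamma_s = \tau_{F_{r,1}}\tau_{F_{r,2}}$ is hyperbolic with translation length twice the distance between the corresponding hyperplanes, bounded above by $2(r+R)$, while $\ell(\rho_s(\gamma_s))$ is twice a distance asymptotic to $2s$.  Combined with $\ell(\rho_s(\gamma_s))\leq K\ell(\gamma_s)$ and the asymptotics $r, s \to \infty$ with $r/s\to 1$, the ratio $\ell(\rho_s(\gamma_s))/\ell(\gamma_s)\to 1$.  I would encourage you to revisit your argument with this in mind: the trick is not to make the covers volume-matched, but to make both polytopes grow simultaneously so that the constant-sized excess $\delta - s$ and $r+R - r$ becomes negligible in the Lipschitz and translation-length estimates.
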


We note that a $K$--Lipschitz map $f \colon M_\Gamma \to M_{\Gamma'}$ has the property that $\frac{\ell(f_*(\gamma))}{\ell(\gamma)} \leq K$, and hence the optimal Lipschitz constant must tend to $1$ for the sequence produced by Theorem~\ref{T:main RACG2}.  Passing to appropriate torsion-free, finite index subgroups, we also obtain nonzero degree strict contractions between hyperbolic manifolds.

The restriction to dimension at most $4$ is necessary simply because these are the only dimensions in which examples of cocompact right-angled reflection groups exist (see \cite{pogorelov,taiyo} in dimension $3$ and \cite{Davis} in dimension $4$ for examples, and \cite{potvin,vinberg,dufour} for nonexistence in higher dimensions.).

Our construction is very elementary, but there are many more intricate constructions of nonzero degree maps between hyperbolic $3$--manifolds (see e.g.~\cite{WangDegree}).  In particular,  for any two closed hyperbolic $3$--manifolds $M_\Gamma$ and $M_{\Gamma'}$, Hongbin Sun \cite{hongbin} has a very robust method for constructing degree $2$ maps from some finite sheeted cover of $M_\Gamma$ to $M_{\Gamma'}$.  In recent work, Sun and Liu \cite{LiuSun} have refined this construction further to produce strict contractions of degree $1$.  The techniques in both cases involve quite a bit more technical machinery, including the work of Kahn-Markovic \cite{kahnmarkovic} on constructing nearly totally geodesic surfaces in hyperbolic $3$--manifolds and that of Agol and Wise proving that hyperbolic $3$--manifold groups are LERF (see \cite{AgolVhaken} and \cite{AgolGrovesManning}).  We would also like to note that in earlier work, Gaifullin \cite{gaifullin} produced nonzero degree maps from finite sheeted covers of orbifolds obtained from right-angled rotation groups onto manifolds by different techniques, though that construction provided no explicit control over Lipschitz constants.


\subsection{Strictly dominating representations} \label{S:strictly dominating intro}

Suppose $\Gamma \leq \SO_0(d,1)$ is a lattice, $j \colon \Gamma \to \SO_0(d,1)$ the inclusion, $\rho \colon \Gamma \to \SO_0(d,1)$ is another representation, and write $j \times \rho(\Gamma) < \SO_0(d,1) \times \SO_0(d,1)$.  For $K < 1$, one says that {\em $j$ strictly $K$--dominates $\rho$} if there exists a $(j,\rho)$--equivariant map $\widetilde f \colon \mathbb H^d \to \mathbb H^d$ which is $K$--Lipschitz.    If $\Gamma$ and $\rho(\Gamma)$ are both cocompact lattices, then an equivariant $K$--Lipschitz map $\widetilde f \colon \mathbb H^d \to \mathbb H^d$ descends to a $K$--Lipschitz map $f \colon M_\Gamma \to M_{\rho(\Gamma)}$ (and any $K$--Lipschitz map $f$ lifts to an equivariant $K$--Lipschitz map $\widetilde f$).

The following is a special case of a result of Gu\'eritaud--Kassel \cite{GueritaudKassel}.   Note that $\SO_0(d,1) \times \SO_0(d,1)$ acts on $\SO_0(d,1)$ on the left by  $(a,b) \cdot x = axb^{-1}$.

\begin{theorem} [Gu\'eritaud--Kassel]  \label{T:G-K} Suppose $j \colon \Gamma \to \SO_0(d,1)$ is the inclusion of a lattice and $\rho \colon \Gamma \to \SO_0(d,1)$ is a representation.  Then the action of $\Gamma$ on $\SO_0(d,1)$ from the inclusion $j \times \rho(\Gamma) \subset \SO_0(d,1) \times \SO_0(d,1)$ is properly discontinuous if and only if $\rho$ is strictly dominated by $j$.
\end{theorem}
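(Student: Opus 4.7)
The plan is to exploit the rank-one nature of $G = \SO_0(d,1)$: the Cartan projection here is essentially the basepoint displacement $\mu(g) = d(o, g \cdot o)$ on $\mathbb H^d$, for a choice of basepoint $o$. By the Benoist--Kobayashi properness criterion for homogeneous actions, the left-right action of $\Gamma$ on $G$ through $j \times \rho$ is properly discontinuous if and only if $|\mu(j(\gamma_n)) - \mu(\rho(\gamma_n))| \to \infty$ for every sequence $\gamma_n \to \infty$ in $\Gamma$. The task is then to match this Cartan-projection condition with the geometric condition of strict domination.

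For the easy direction, suppose $\widetilde f \colon \mathbb H^d \to \mathbb H^d$ is $(j,\rho)$-equivariant and $K$-Lipschitz with $K < 1$. Fix $o$ and set $R = d(o, \widetilde f(o))$. Equivariance gives $\rho(\gamma) \widetilde f(o) = \widetilde f(j(\gamma) o)$, so the Lipschitz bound and the triangle inequality yield
\[
\mu(\rho(\gamma)) \leq 2R + d(\widetilde f(o), \widetilde f(j(\gamma) o)) \leq 2R + K\mu(j(\gamma)).
\]
Hence $\mu(j(\gamma_n)) - \mu(\rho(\gamma_n)) \geq (1-K)\mu(j(\gamma_n)) - 2R$, which tends to infinity whenever $\gamma_n \to \infty$ in $\Gamma$ (for a cocompact lattice, $\mu(j(\gamma_n)) \to \infty$ automatically). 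This establishes properness.

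For the converse, assume the action is proper. I would introduce $L(j,\rho)$, the infimum of Lipschitz constants among all $(j,\rho)$-equivariant self-maps of $\mathbb H^d$. Using cocompactness of $\Gamma$ acting via $j$, such equivariant maps descend to maps of the compact quotient $M_\Gamma$, and an Arzel\`a--Ascoli argument realizes this infimum by an optimal equivariant map. The goal is then to show $L(j,\rho) < 1$, which directly produces the desired strict contraction.

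The main obstacle is this last step. Following the philosophy of Thurston's stretch analysis in dimension two and its extension to higher dimensions by Gu\'eritaud--Kassel, I would argue the contrapositive: if $L(j,\rho) \geq 1$, I would locate a maximal stretch locus for an optimal $L$-Lipschitz equivariant map and extract from it a sequence of hyperbolic elements $\gamma_n \in \Gamma$ with $\ell(\rho(\gamma_n))/\ell(j(\gamma_n)) \to L \geq 1$. Along high powers of such elements, the Cartan projections $\mu(j(\gamma_n^{k}))$ and $\mu(\rho(\gamma_n^{k}))$ remain within a bounded distance of one another, contradicting the Benoist--Kobayashi criterion. Constructing the stretch locus and extracting the required sequence of $\gamma_n$'s, uniformly in dimension $d$, is the real technical content of the theorem, and is precisely what Gu\'eritaud and Kassel carry out.
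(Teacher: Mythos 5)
The paper does not prove Theorem~\ref{T:G-K}; it cites the result directly to Gu\'eritaud--Kassel (with the $d=2$ case attributed to Kassel's thesis) and proceeds to use it as a black box. So there is no ``paper proof'' against which to compare your sketch. What you have written is, in outline, a faithful summary of the Gu\'eritaud--Kassel strategy: reduce properness to the rank-one Cartan projection criterion $|\mu(j(\gamma)) - \mu(\rho(\gamma))| \to \infty$, dispatch the ``strict domination $\Rightarrow$ proper'' direction by the elementary Lipschitz estimate (your inequality $\mu(\rho(\gamma)) \leq 2R + K\mu(j(\gamma))$ is correct and complete), and handle the converse via the minimal equivariant Lipschitz constant $L(j,\rho)$ and a stretch-locus analysis.

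Two places in the hard direction deserve more care. First, the contradiction step as you phrase it does not quite close: if $\ell(\rho(\gamma_n))/\ell(j(\gamma_n)) \to L \geq 1$, then for a fixed $\gamma_n$ the Cartan projections of $\gamma_n^k$ satisfy $\mu(\rho(\gamma_n^k)) - \mu(j(\gamma_n^k)) \approx k\bigl(\ell(\rho(\gamma_n)) - \ell(j(\gamma_n))\bigr)$, which need not stay bounded as $k \to \infty$ (it stays bounded only if the ratio for that particular element is exactly $1$). The actual Gu\'eritaud--Kassel argument has to work harder, extracting the stretch lamination and producing a carefully chosen sequence in $\Gamma$ along which the two Cartan projections differ by $o(1)$ of their size; it is not simply ``take high powers of one element.'' Second, the Arzel\`a--Ascoli step presumes cocompactness, but the theorem is stated for an arbitrary lattice, and the paper genuinely uses the noncocompact case later (Theorem~\ref{T:noncompact examples}). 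For noncocompact $\Gamma$ the optimal Lipschitz map need not exist, the cusps must be handled separately, and properness forces $\rho$ to be cusp-deteriorating (parabolics must go to elliptics or the identity); none of that appears in your sketch. So: correct scaffolding and a complete easy direction, but the technical core of the hard direction and the noncocompact case are gestured at rather than argued.
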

The case $d=2$ is due to Kassel \cite{KasselThesis}.  Theorem~\ref{T:G-K} has been generalized by Gu\'eritaud--Guichard-Kassel--Wienhard \cite{GGKW} and in a different direction by Danciger--Gu\'eritaud--Kassel \cite{DanGuerKassRACG}.  There are numerous examples of properly discontinuous actions on $\SO_0(d,1)$; see \cite{GoldmanNonstandard,Ghys,KobayashiDef,Salein,KasselThesis,GueritaudKassel,GuerKassWolff,DeroinTholozan,TholozanVol,DanGuerKass}.

For a lattice $\Gamma < \SO_0(d,1)$, the quotients $j \times \rho(\Gamma)\lmod \SO_0(d,1)$ as in the theorem are $\SO(n)$--bundles over $M_\Gamma$; see \cite{GueritaudKassel}.
There are three associated volumes in this situation: $\Vol(j \times \rho)$, the volume of the quotient $j \times \rho(\Gamma)\lmod \SO_0(d,1)$ with respect to a (suitably normalized) invariant volume form on $\SO_0(d,1)$; $\Vol(j)$, the volume of the quotient $M_\Gamma = j(\Gamma) \lmod \mathbb H^d$; and $\Vol(\rho)$, the volume of the representation $\rho$, defined by the integral
\[ \Vol(\rho) = \int_{M_\Gamma} \widetilde f^* d\vol_{\mathbb H} \]
where $\vol_{\mathbb H}$ is the hyperbolic volume form on $\mathbb H^d$, $\widetilde f \colon \mathbb H^d \to \mathbb H^d$ is a $(j,\rho)$--equivariant, piecewise smooth map, and $\widetilde f^* d \vol_{\mathbb H}$ is the pull-back of $\vol_{\mathbb H}$ by $\widetilde f$ (pushed down to $M_\Gamma$).

In \cite{TholozanVol} Tholozan proved the following theorem relating these volumes.
\begin{theorem}[Tholozan] \label{T:Tholozan volume}
If $\Gamma < \SO_0(d,1)$ is a lattice and the inclusion $j$ strictly dominates a representation $\rho$, then 
\[ \Vol(j \times \rho) = {\bf V}_d(\Vol(j) + (-1)^d \Vol(\rho))\]
where ${\bf V}_d$ is the volume of $\SO(d)$ (suitably normalized).  Moreover, the function $\rho \mapsto \Vol(j \times \rho)$ is locally constant on the space $\Dom(\Gamma,\SO_0(d,1))$ of representations $\rho$ of $\Gamma$ strictly dominated by $j$, except when $n =2$ and $\Gamma$ is noncocompact.
\end{theorem}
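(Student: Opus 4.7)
The plan is to compute $\Vol(j \times \rho)$ by constructing a fundamental domain for the properly discontinuous $j \times \rho(\Gamma)$--action on $\SO_0(d,1)$ adapted to the equivariant strict contraction $\widetilde f$, running a Fubini-type decomposition over the $\SO(d)$--fibration, and then deducing local constancy from the cohomological character of $\Vol(\rho)$.

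First I would identify $\SO_0(d,1)$ with the oriented orthonormal frame bundle of $\mathbb H^d$, on which the bi-invariant Haar measure splits (with an appropriate normalization) as $d\vol_{\mathbb H}$ times fiber Haar measure of mass ${\bf V}_d$. The left $j(\Gamma)$--action is the tautological action on frames, so the intermediate quotient $j(\Gamma) \lmod \SO_0(d,1)$ is the frame bundle of $M_\Gamma$, of volume ${\bf V}_d \Vol(j)$. The entire content of the formula is that the residual twisted right $\rho(\Gamma)$--action on this frame bundle contributes exactly $(-1)^d {\bf V}_d \Vol(\rho)$.

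To see this, I would use $\widetilde f$ to produce a $(j,\rho)$--equivariant frame field along a Dirichlet fundamental domain $D \subset \mathbb H^d$ for $j(\Gamma)$, with the graph $\{(x,\widetilde f(x))\} \subset \mathbb H^d \times \mathbb H^d$ serving as a canonical ``twist.'' Slicing the Haar measure over $D \times \SO(d)$ and accounting for the identifications produced by the $\rho$--side of the action yields a bulk term ${\bf V}_d \Vol(j)$ together with a corrective contribution which, after a change of variables along the graph of $\widetilde f$, reduces to ${\bf V}_d \int_{M_\Gamma} \widetilde f^* d\vol_{\mathbb H} = {\bf V}_d \Vol(\rho)$. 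The sign $(-1)^d$ should emerge from comparing orientations of the two factors in $(a,b)\cdot g = a g b^{-1}$: on the Lie-algebra splitting $\mathfrak{so}(d,1) = \mathfrak{so}(d) \oplus \mathfrak p$ with $\dim \mathfrak p = d$, the right inversion reverses orientation on $\mathfrak p$, contributing $(-1)^d$ to the top form.

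For the local constancy statement, I would argue that $\Vol(\rho)$ is the pairing of $[\widetilde f^* d\vol_{\mathbb H}] \in H^d(M_\Gamma; \mathbb R)$ with the fundamental class of $M_\Gamma$. Continuous deformations of $\rho$ yield equivariantly homotopic maps $\widetilde f_t$, and Stokes' theorem applied to $M_\Gamma \times [0,1]$ shows that this cohomology class is locally constant---except in the noncocompact $d = 2$ case, where cusp contributions from the boundary of the cylinder can vary with $\rho$ and spoil the argument. The main obstacle I expect is the careful bookkeeping of the corrective region and its sign, since proper discontinuity (and hence the existence of a well-defined fundamental domain) is already packaged in Theorem~\ref{T:G-K}: extracting the clean formula demands a precise comparison of the two natural volume forms pulled back from the two copies of $\mathbb H^d$ near the graph of $\widetilde f$, and tracking how their ratio interacts with the $\SO(d)$--fibers in $\SO_0(d,1)$.
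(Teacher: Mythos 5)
The paper does not prove this statement: it is quoted verbatim as Tholozan's theorem, with a citation to \cite{TholozanVol}, and is used as a black box in deriving Corollary~\ref{C:volume comp 1} and Corollary~\ref{C:arbitrarily large image}. There is therefore no proof in this paper against which to compare your sketch; you would need to consult \cite{TholozanVol} for the actual argument.

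On the merits of your sketch: the opening moves are sound --- identifying $\SO_0(d,1)$ with the oriented orthonormal frame bundle of $\mathbb H^d$, decomposing the (normalized) Haar measure as $d\vol_{\mathbb H}$ times an $\SO(d)$--fiber measure of total mass ${\bf V}_d$, and recognizing $j(\Gamma)\lmod\SO_0(d,1)$ as the frame bundle of $M_\Gamma$ with volume ${\bf V}_d\Vol(j)$. But the core of the theorem is the identification of the ``corrective contribution'' with $(-1)^d{\bf V}_d\Vol(\rho)$, and this step is asserted rather than derived: you do not explain why the identifications produced by the $\rho$--side of the action localize to the graph of $\widetilde f$ or how the change of variables produces the pullback form $\widetilde f^*d\vol_{\mathbb H}$ rather than something else. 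The sign argument as stated is also not convincing: inversion $b\mapsto b^{-1}$ acts by $-\mathrm{id}$ on the whole of $\mathfrak{so}(d,1)$ at the identity, not just on $\mathfrak p$, and Haar measure is in any case insensitive to orientation reversal, so the parity of $\dim\mathfrak p=d$ alone does not account for the factor $(-1)^d$. In Tholozan's argument the sign emerges from a careful comparison of orientations of two natural fibrations of the quotient over $M_\Gamma$ (built from $\widetilde f$ via a fixed-point construction), and your sketch does not isolate that comparison. The local-constancy part of your outline --- viewing $\Vol(\rho)$ as a pairing with the fundamental class and using an equivariant homotopy plus Stokes, with the noncocompact $d=2$ case spoiled by boundary terms --- is the right idea, and essentially matches the literature.
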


When $\Gamma < \SO_0(2,1)$ is a lattice, the possible values of $\Vol(j \times \rho)$ for a representation $\rho \in \Dom(\Gamma,\SO_0(2,1))$ are well understood.  When $\Gamma < \SO_0(2,1)$ is a torsion-free, cocompact lattice, the work of Salein \cite{Salein} and Tholozan \cite{TholozanVol} shows that the image is exactly the finite set $\{ 4\pi^2 k \mid 1 \leq k < -2\chi(\Gamma \! \setminus \! \mathbb H^2) \}$.  If $\Gamma < \SO_0(2,1)$ is a torsion-free, noncocompact lattice, then Tholozan \cite{TholozanVol} showed that the images is the entire open interval $(0,-8\pi^2\chi(\Gamma \! \setminus \! \mathbb H^2))$.

For $d \geq 3$ and $\Gamma < \SO_0(d,1)$, the situation is much less clear.  The only known value of $\Vol(j \times \rho)$ is given by ${\bf V}_d \Vol(j)$ since all known examples of $\rho \in \Dom(\Gamma,\SO_0(d,1))$ are deformations of the trivial representation (and sufficiently small, nontrivial deformations of the trivial representation are indeed in $\Dom(\Gamma,\SO_0(d,1))$ when $\Gamma$ is cocompact; see Ghys \cite{Ghys}, Kobayashi \cite{KobayashiDef}, and Gu\'eritaud--Kassel \cite{GueritaudKassel}).  Question 5.2 of \cite{TholozanVol} asks if there are any others.  Specifically, it asks whether there are any representations $\rho \colon \Gamma \to \SO_0(d,1)$ strictly dominated by $j$ with $\Vol(j \times \rho) \neq {\bf V}_d \Vol(j)$, when $d \geq 3$.
As a corollary of Theorem~\ref{T:main RACG} (or Theorem~\ref{T:main RACG2}) and Theorem~\ref{T:Tholozan volume}, we answer this question in the affirmative, for $d = 3$ and $4$.

In the special case that $\Gamma$ and $\rho(\Gamma)$ are both cocompact lattices and there exists contracting map $f \colon M_\Gamma \to M_{\rho(\Gamma)}$, then $\Vol(\rho) = \deg(f) \Vol(N_\rho)$.  By Theorem~\ref{T:G-K}, $j \times \rho(\Gamma)$ acts properly discontinuously, and the volume formula from Theorem~\ref{T:Tholozan volume} becomes
\[ \Vol(j \times \rho) = {\bf V}_d(\Vol(M_\Gamma) + (-1)^d \deg(f)\Vol(M_{\rho(\Gamma)})).\]
Consequently, we immediately obtained the following.
\begin{corollary} \label{C:volume comp 1} Let $f_n \colon M_{\Gamma_n} \to M_{\Gamma'}$ be the strict $K_n$--domination from Theorem~\ref{T:main RACG}.  Then $f_{n*} \colon \Gamma_n \to \Gamma' < \SO_0(d,1)$ is strictly dominated by the inclusion $j \colon \Gamma_n \to \SO_0(d,1)$ and
\begin{eqnarray*} 
\Vol(j \times f_{n*}) &=& {\bf V}_d(\Vol(M_{\Gamma_n}) +(-1)^d \Vol(M_{\Gamma'}))\\
& = & {\bf V}_d([\Gamma:\Gamma_n] \Vol(M_\Gamma) + (-1)^d \Vol(M_{\Gamma'})).
\end{eqnarray*}
A similar statement holds for $f_n \colon M_{\Gamma_n} \to M_{\Gamma_n'}$ from Theorem~\ref{T:main RACG2}
\end{corollary}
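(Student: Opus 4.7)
The plan is to assemble Corollary~\ref{C:volume comp 1} as a direct consequence of Theorems~\ref{T:G-K}, \ref{T:Tholozan volume}, and~\ref{T:main RACG}. First I would fix $f_n \colon M_{\Gamma_n} \to M_{\Gamma'}$ produced by Theorem~\ref{T:main RACG} and lift it to a $(j, f_{n*})$-equivariant $K_n$-Lipschitz map $\widetilde{f}_n \colon \mathbb{H}^d \to \mathbb{H}^d$. Since $K_n < 1$, this lift exhibits $f_{n*}$ as being strictly dominated by $j$. Theorem~\ref{T:G-K} then guarantees that the action of $\Gamma_n$ on $\SO_0(d,1)$ via $j \times f_{n*}$ is properly discontinuous, so Theorem~\ref{T:Tholozan volume} applies and yields
\[ \Vol(j \times f_{n*}) = {\bf V}_d\bigl(\Vol(j) + (-1)^d \Vol(f_{n*})\bigr), \]
where $\Vol(j) = \Vol(M_{\Gamma_n})$ by definition.

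Next I would identify $\Vol(f_{n*})$ using the integral formula given just before the corollary. Because $f_n$ has degree one and is piecewise smooth, the pull-back $\widetilde f_n^* d\vol_{\mathbb{H}}$ descends to $M_{\Gamma_n}$ and integrates to $\deg(f_n)\Vol(M_{\Gamma'}) = \Vol(M_{\Gamma'})$. Substituting this into the Tholozan formula gives the first equality of the corollary. The second equality then follows immediately from the standard orbifold covering relation $\Vol(M_{\Gamma_n}) = [\Gamma : \Gamma_n]\Vol(M_\Gamma)$, valid for any finite-index inclusion $\Gamma_n \leq \Gamma$.

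The parallel statement for the maps $f_n \colon M_{\Gamma_n} \to M_{\Gamma_n'}$ supplied by Theorem~\ref{T:main RACG2} follows by the identical argument applied to the target $M_{\Gamma_n'}$, and if desired one may expand further using $\Vol(M_{\Gamma_n'}) = [\Gamma' : \Gamma_n']\Vol(M_{\Gamma'})$. There is essentially no obstacle here: once Theorem~\ref{T:main RACG} (or Theorem~\ref{T:main RACG2}) has been established, the corollary is a brief bookkeeping exercise combining the cited theorems with the observation that a degree-one Lipschitz map between closed, oriented hyperbolic orbifolds pulls the hyperbolic volume form back to a form whose total mass equals the volume of the target.
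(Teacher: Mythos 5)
Your proof is correct and follows essentially the same route as the paper: lift $f_n$ to an equivariant $K_n$-Lipschitz map to get strict domination, invoke Theorem~\ref{T:G-K} for proper discontinuity, apply Theorem~\ref{T:Tholozan volume}, identify $\Vol(f_{n*}) = \deg(f_n)\Vol(M_{\Gamma'})$ using degree one, and finish with the index--volume relation for the finite-index subgroup. The paper packages these same steps into the short paragraph immediately preceding the corollary, so there is nothing to add.
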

We will see that as another consequence of Theorem~\ref{T:main RACG}, the values of the function $\rho \mapsto \Vol(j \times \rho)$  on $\Dom(\Gamma,\SO_0(d,1))$ can be any arbitrarily large (finite) number in dimension $d =3,4$; see Corollary~\ref{C:arbitrarily large image}.

The quotients of $\SO_0(2,1)$ are particularly interesting because $\SO_0(2,1)$ can be identified with {\em anti-de Sitter $3$--space} and $\SO_0(2,1) \times \SO_0(2,1)$ the identity component of its isometry group.  Thus the quotients are anti-de Sitter $3$--manifolds.  In fact, up to finite covers, all anti-de Sitter $3$--manifolds are obtained in this way; see Klingler \cite{Klingler} and Kulkarni-Raymond \cite{KulkarniRaymond}.

The $3$--dimensional case is also quite interesting because of the isomorphism $\SO_0(3,1) \cong \PSL(2,\mathbb C)$.  In this case, the Killing form is a bi-invariant {\em holomorphic Riemannian metric of constant negative curvature};  see Ghys \cite{Ghys}.  Therefore, the quotients $j \times \rho(\Gamma)\lmod \SO_0(3,1)$ inherit such holomorphic Riemannian metrics; see the work of Dumitrescu \cite{Dumitrescu} and Dumitrescu-Zeghib \cite{DumitrescuZeghib} for classification results for these structures in low dimensions.  Ghys \cite{Ghys} studied such quotients in the case that $\rho$ is a small deformation of the trivial representation, proving that these were precisely the small deformations of the complex structure.  In this context, the $3$--dimensional examples we have presented here could be viewed as {\em exotic} $\SO_0(3,1)$--quotients (and similarly exotic $\SO_0(4,1)$--quotients for the $d=4$ case).

As mentioned above, Danciger--Gu\'eritaud--Kassel \cite{DanGuerKassRACG} prove a generalization of Theorem~\ref{T:G-K}.  Specifically, they consider the pseudo-Riemannian case $\mathbb H^{p,q}$ associated to $\SO(p,q+1)$, and prove that proper discontinuity only requires strict contraction in the space-like directions.  Using this they are able to construct new (noncocompact) properly discontinuous actions of any right-angled Coxeter group via deformations of the standard action (as well as some interesting, explicit constructions of deformations of hyperbolic reflection groups in low dimensions).  Using an infinitesimal form of their condition, they also provide a whole new class of groups acting properly discontinuous as affine transformations (see~\cite{Margulis}, \cite{Drumm}, \cite{GLM}, \cite{CDG}, \cite{ChoiGoldman}, \cite{Smilga} for more on these types of affine actions).


\subsection{Tetrahedral groups}

Agol's original example is obtained from a specific pair of tetrahedral groups; rotation subgroups of groups generated by reflections in the faces of a pair of tetrahedra.  In this case one can explicitly construct a $K$--Lipschitz diffeomorphism between the tetrahedra which induces a homomorphism of reflection groups (and consequently a degree $1$ map between quotient orbifolds of the rotation subgroups).  As Agol's examples have not appeared in print, we recall his construction in Section~\ref{S:tetrahedron}; see Theorem~\ref{T:Agol}.

In fact, a slight variation of Agol's example can be used to produce examples of strictly dominated representations $\rho \colon \Gamma \to \SO_0(3,1)$ in which $\Gamma$ is a noncocompact lattice; see Theorem~\ref{T:noncompact examples}.  In this case, Gu\'eritaud and Kassel \cite{GueritaudKassel} observed that $\rho$ must be {\em cusp deteriorating}, meaning that for any parabolic $\gamma \in \Gamma$, $\rho(\gamma)$ must be elliptic (or trivial).  This is indeed the case in our examples.   Using an explicit form of this cusp deterioration, Thurston's {\em hyperbolic Dehn filling}, together with the examples from Theorem~\ref{T:noncompact examples}, provide another robust source of nonzero degree, $K$--Lipschitz maps (with $K <1$) between compact quotients of $\mathbb H^3$.  See Section~\ref{S:tetrahedron} and Theorem~\ref{T:dehn filling examples} for a more precise statement and details.

\noindent {\bf Plan of the paper.}  In Section~\ref{S:contraction} we describe three well-known methods of constructing contractions $\mathbb H^d \to \mathbb H^d$.  These will be the building blocks for our constructions.   Then in Section~\ref{S:RA polyhedra} we prove some preliminary facts about right-angled hyperbolic polyhedra, their tilings of hyperbolic $3$--space, and the associated ``hulls''.  This has its roots in Scott's work \cite{ScottSep}, and similar techniques were applied by Agol-Long-Reid \cite{AgolLongReid}.  Next we prove a general criterion which ensures the existence of a homomorphism between right-angled rotation groups and equivariant contractions $\mathbb H^d \to \mathbb H^d$, so that the descent to the quotients are degree $1$ contractions (see Theorem~\ref{T:main 1 simpler}).   Theorems~\ref{T:main RACG} and \ref{T:main RACG2} are applications of this criterion.  In the final Section~\ref{S:tetrahedron}, we explain Agol's original example (Theorem~\ref{T:Agol}), and a variant on this for noncompact reflection groups (Theorem~\ref{T:noncompact examples}), and how to apply Thurston's hyperbolic Dehn filling to obtain more compact-to-compact examples (Theorem~\ref{T:dehn filling examples}).

\bigskip

\noindent {\bf Acknowledgements.}  The authors would like to thank Ian Agol, Fanny Kassel, Bruno Martelli, Alan Reid, Hongbin Sun, and Nicolas Tholozan for suggestions and helpful conversations.  The second author was partially supported by NSF grant DMS-1510034.

\section{Contractions} \label{S:contraction}

Here we briefly describe three methods for constructing contractions $\mathbb H^d \to \mathbb H^d$.   The contractions in our proof will use these as building blocks.

\subsection{Projective maps} \label{S:projective maps}

For the first construction, we consider the projective model of $\mathbb H^d$,
\[ \mathbb K^d = \{ x \in \mathbb R^d \mid |x| < 1 \},\]
where $|x|$ is the Euclidean norm on $\mathbb R^d$.  The hyperbolic metric on $\mathbb K^d$ is given by one-half the Hilbert metric: 
\[ d(x,y) = \frac{1}{2} \log\left( \frac{|x-y'||y -x']}{|x-x'||y-y'|} \right), \] 
for all $x \neq y \in \mathbb K^d$, where $x',y'$ are endpoints on the unit sphere of the unique Euclidean straight line segment containing $x$ and $y$, such that the vector $y'-x'$ is a positive multiple of $y-x$.

\begin{figure}[htb]
\begin{center}
\begin{tikzpicture}[xscale=1.5,yscale=1.5]
\draw (0,0) circle [radius=1];
\draw [fill] ({1/sqrt(2)},{1/sqrt(2)}) circle [radius=.02];
\draw [fill] (-1,0) circle [radius=.02];
\draw [fill] ({-1+(1/sqrt(2)+1)/4},{(1/sqrt(2))/4}) circle [radius=.02];
\draw [fill] ({-1+(3/sqrt(2)+3)/4},{(3/sqrt(2))/4}) circle [radius=.02];
\draw (-1,0) -- ({1/sqrt(2)},{1/sqrt(2)});
\node at ({-1+(1/sqrt(2)+1)/4+.1},{(1/sqrt(2))/4-.08}) {$x$};
\node at ({-1+(3/sqrt(2)+3)/4+.1},{(3/sqrt(2))/4-.08}) {$y$};
\node [left] at (-1,0) {$x'$};
\node at ({1/sqrt(2)+.2},{1/sqrt(2)+.06}) {$y'$};
\end{tikzpicture}
\caption{\label{F:hilbert metric picture} The segment $[x',y']$ used to define the Hilbert metric.}
\end{center}
\end{figure}
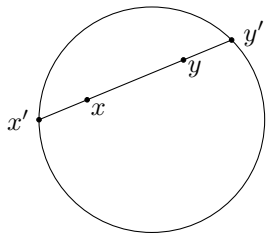

The following is a special case of a classical result of Birkhoff \cite{Birkhoff}.

\begin{lemma} [Birkhoff] \label{L:birkhoff} Suppose $f \colon \mathbb R^d \to \mathbb R^d$ is a linear transformation such that $f(\mathbb K^d) \subset \mathbb K^d$, and $\diam_{\mathbb H}(f(\mathbb K^d)) = \delta < \infty$.  Then $f$ is a $K$--Lipschitz map, with $K = \tanh(\delta/2) < 1$. 
\end{lemma}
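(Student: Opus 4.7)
The plan is to exploit projective invariance of cross-ratios to reduce the lemma to a one-dimensional calculation along a single line, which is essentially Birkhoff's classical contraction principle for projective metrics. Fix $x \ne y \in \mathbb K^d$ with $f(x) \ne f(y)$, and let $\ell$ be the Euclidean line through $x$ and $y$, meeting $\partial \mathbb K^d$ at the points $x', y'$ appearing in the definition of $d$. Since $f$ is linear, $f(\ell)$ is a Euclidean line $\tilde \ell$ through $f(x), f(y)$, carrying the four collinear points $x', x, y, y'$ to $f(x'), f(x), f(y), f(y')$ in the same order; the outer pair $f(x'), f(y') \in \partial f(\mathbb K^d)$ is precisely the pair of chord endpoints of $f(\mathbb K^d)$ along $\tilde \ell$. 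Let $z', w' \in \partial \mathbb K^d$ denote the chord endpoints of $\mathbb K^d$ along $\tilde \ell$. The containment $f(\mathbb K^d) \subset \mathbb K^d$ then places these six points on $\tilde \ell$ in the order $z',\, f(x'),\, f(x),\, f(y),\, f(y'),\, w'$.

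Writing $h(p_1, p_2; a, b) = \tfrac{1}{2}\log\frac{|a-p_2|\,|b-p_1|}{|a-p_1|\,|b-p_2|}$ for the half-log-cross-ratio with outer pair $p_1, p_2$ bracketing the inner pair $a, b$, the definition of $d$ gives $d(f(x), f(y)) = h(z', w'; f(x), f(y))$, while projective invariance of the cross-ratio under the affine map $f|_\ell$ gives $d(x, y) = h(f(x'), f(y'); f(x), f(y))$. Since $f(x'), f(y') \in f(\mathbb K^d)$, the diameter hypothesis additionally yields $h(z', w'; f(x'), f(y')) = d(f(x'), f(y')) \le \delta$. The lemma therefore reduces to the one-dimensional projective inequality: for six ordered collinear points $p_1 < q_1 < a < b < q_2 < p_2$ with $h(p_1, p_2; q_1, q_2) \le \delta$,
\[ \frac{h(p_1, p_2; a, b)}{h(q_1, q_2; a, b)} \le \tanh(\delta/2). \]

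The main obstacle is this one-dimensional inequality, which is Birkhoff's classical contraction estimate for the Hilbert metric. I would prove it by first normalizing via a projective transformation sending $p_1 \mapsto 0$ and $p_2 \mapsto \infty$ (preserving cross-ratios), so that $h(p_1, p_2; a, b)$ becomes the half-log-ratio $\tfrac{1}{2}\log(b/a)$; the ratio in the infinitesimal limit $b \to a$ is then $(a - q_1)(q_2 - a)/[a(q_2 - q_1)]$, which elementary calculus maximizes at $a = \sqrt{q_1 q_2}$ with value $(\sqrt{q_2} - \sqrt{q_1})/(\sqrt{q_2} + \sqrt{q_1}) = \tanh(\tfrac{1}{4}\log(q_2/q_1)) = \tanh(\delta/2)$. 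This bounds the pointwise (tangential) Lipschitz constant of $f$ along every line in $\mathbb K^d$ by $\tanh(\delta/2)$; integrating this bound along geodesics (which are Euclidean chords) and invoking the monotonicity of $\tanh$ under the constraint $h(p_1, p_2; q_1, q_2) \le \delta$ then yields the global Lipschitz estimate.
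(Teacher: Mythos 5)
The paper does not give its own proof of this lemma; it simply cites Birkhoff. Your argument is a complete and correct proof of the special case the paper needs, following the standard lines of Birkhoff's contraction theorem: reduce to a one-dimensional cross-ratio inequality using linearity and projective invariance, then establish that inequality by computing the infinitesimal contraction factor, maximizing over the interior point, and integrating along the chord. The reduction is set up correctly — writing $d(f(x),f(y)) = h(z',w';f(x),f(y))$ with the outer pair the $\partial\mathbb K^d$--chord endpoints, while $d(x,y) = h(f(x'),f(y');f(x),f(y))$ by projective invariance, and $h(z',w';f(x'),f(y')) = d(f(x'),f(y')) \le \delta$ — and the calculus is right: the pointwise ratio is $(a-q_1)(q_2-a)/[a(q_2-q_1)]$, maximized at $a = \sqrt{q_1 q_2}$ with value $(\sqrt{q_2}-\sqrt{q_1})/(\sqrt{q_2}+\sqrt{q_1}) = \tanh\bigl(\tfrac14\log(q_2/q_1)\bigr)$.

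Two small points you should tidy. First, the parenthetical assertion that $f(x'),f(y')$ are ``precisely the pair of chord endpoints of $f(\mathbb K^d)$ along $\tilde\ell$'' is not literally correct when $f$ is singular (then $f(\mathbb K^d)$ can be lower-dimensional and the preimage of $\tilde\ell\cap f(\mathbb K^d)$ is not just $\ell\cap\mathbb K^d$); but this claim is never actually used — all you need is that $f(x'),f(y')\in\overline{f(\mathbb K^d)}$, which gives $d(f(x'),f(y'))\le\delta$, and that the finiteness of $\delta$ forces $\overline{f(\mathbb K^d)}$ to be a compact subset of the open ball, so $z' < f(x')$ and $f(y') < w'$ strictly. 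Second, your maximization yields $\tanh\bigl(\tfrac12 h(p_1,p_2;q_1,q_2)\bigr)$, which is only equal to $\tanh(\delta/2)$ when the intermediate cross-ratio saturates the hypothesis; you do note the monotonicity of $\tanh$ at the end to close this gap, but the displayed equalities mid-sentence should be inequalities. Neither affects the soundness of the argument.
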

\begin{remark} More generally, if we consider $\mathbb R^d \subset {\mathbb RP}^d$, then the conclusion of Lemma~\ref{L:birkhoff} remains true for any projective transformation $f \colon {\mathbb RP}^d \to {\mathbb RP}^d$ satisfying the hypotheses, though we will only use it in the case that $f$ is linear.
\end{remark}

\begin{corollary} \label{C:explicit birkhoff}  For any $\lambda < 1$, if $f \colon \mathbb K^d \to \mathbb K^d$ is a linear map and $f(\mathbb K^d)$ is contained in the ball of Euclidean radius $\lambda$ centered at the origin, then $f$ is $\lambda$--Lipschitz map.
\end{corollary}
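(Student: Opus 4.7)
The plan is to reduce the corollary directly to Birkhoff's lemma (Lemma~\ref{L:birkhoff}) by computing the hyperbolic diameter of the Euclidean ball of radius $\lambda$ centered at the origin in $\mathbb K^d$. Since $f(\mathbb K^d)$ is contained in this Euclidean ball by hypothesis, its hyperbolic diameter is bounded above by the diameter of the ball, and plugging into the formula $K = \tanh(\delta/2)$ will give the desired bound.

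First I would observe that in the projective model, Euclidean balls centered at the origin coincide with hyperbolic balls centered at the origin (this follows from the rotational symmetry of the Hilbert metric about $0$). Thus the hyperbolic diameter of the Euclidean $\lambda$--ball is twice the hyperbolic distance from $0$ to any point $y$ with $|y| = \lambda$. To compute that distance, I apply the Hilbert metric formula with $x = 0$, $y$ with $|y| = \lambda$, and with $x' = -y/\lambda$, $y' = y/\lambda$ being the endpoints on $S^{d-1}$ of the Euclidean line through $0$ and $y$. A short computation gives
\[ |x-y'| = 1, \quad |y-x'| = 1+\lambda, \quad |x-x'| = 1, \quad |y-y'| = 1-\lambda, \]
so
\[ d(0,y) = \tfrac{1}{2}\log\!\left(\frac{1+\lambda}{1-\lambda}\right) = \arctanh(\lambda). \]
Hence the hyperbolic diameter of the Euclidean $\lambda$--ball equals $2\arctanh(\lambda) < \infty$.

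Next, the hypothesis $f(\mathbb K^d) \subset \{|x| \leq \lambda\}$ together with the monotonicity of diameter yields
\[ \delta := \diam_{\mathbb H}(f(\mathbb K^d)) \;\leq\; 2\arctanh(\lambda). \]
In particular $\delta$ is finite, so Lemma~\ref{L:birkhoff} applies and gives that $f$ is $K$--Lipschitz with
\[ K = \tanh(\delta/2) \;\leq\; \tanh(\arctanh(\lambda)) \;=\; \lambda, \]
completing the argument. There is no real obstacle here; the only point requiring a small computation is the identification of hyperbolic balls about $0$ with Euclidean balls about $0$ (and the resulting formula $d(0,y) = \arctanh(|y|)$), and this is a standard fact about the Klein model that one may equally well cite.
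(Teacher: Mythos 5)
Your proof is correct and follows essentially the same route as the paper's: compute the hyperbolic diameter of the Euclidean $\lambda$--ball from the Hilbert metric formula, then feed $\delta = \log\frac{1+\lambda}{1-\lambda}$ into Lemma~\ref{L:birkhoff} and simplify $\tanh(\delta/2) = \lambda$. Your version is slightly more careful in noting only the inequality $\diam_{\mathbb H}(f(\mathbb K^d)) \leq 2\arctanh(\lambda)$ rather than equality, but this is a cosmetic difference and the argument is identical in substance.
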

\begin{proof} Using the formula above for the hyperbolic metric, we can compute that the diameter of $f(\mathbb K^d)$ is $\delta = \log\left( \frac{1+\lambda}{1-\lambda} \right)$.  By Lemma~\ref{L:birkhoff}, $f$ is $K$--Lipschitz, where $K = \tanh(\delta/2)$.  An elementary calculation then shows $K = \tanh(\delta/2) = \lambda$, as required.
\end{proof}

\subsection{Spherical coordinate maps} \label{S:spherical coordinate maps}

Next we use spherical coordinates to construct Lipschitz maps between concentric balls in $\mathbb H^d$.  Specifically, fix a basepoint $x$ in $\mathbb H^d$ and let $B_r = B_r(x)$ denote the closed ball of radius $r> 0$ centered at $x$.  We let $(t,\theta)$ denote ``spherical coordinates" on $B_r$, where $t \in [0,r]$ and $\theta \in S^{d-1}$, the unit sphere in the tangent space to the basepoint.  The paths $\theta = $ {\em constant} are geodesics called {\em radial geodesics}, and the spheres $t =$ {\em constant}, denoted $S_t^{d-1}$, and called {\em normal spheres}, are orthogonal to the radial geodesics.  The hyperbolic metric is easily described in these spherical coordinates as an orthogonal direct sum $dt^2 + \sinh^2(t) ds^2_{S^{d-1}}$ where $ds^2_{S^{d-1}}$ is the round metric from the (Euclidean) metric on the tangent space at $x$.

For any $R > r$, let $\psi = \psi_{r,R} \colon B_R \to B_r$ be the map given by $\psi(t,\theta) = (rt/R,\theta)$ in spherical coordinates.  That is, the map is simply a homothety by a factor $r/R$ applied to each radial geodesic.
\begin{lemma}  \label{L:spherical contraction} For all $R > r$, the map $\psi =\psi_{r,R}$ is $r/R$--Lipschitz.  Furthermore, the restriction $\psi|_{S_R^{d-1}} \colon S_R^{d-1} \to S_r^{d-1}$ is $e^{r-R}$--Lipschitz.
\end{lemma}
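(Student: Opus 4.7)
The plan is to work in the spherical coordinates $(t,\theta)$ already introduced and bound the operator norm of $d\psi$ pointwise. First I would decompose a tangent vector at $(t,\theta)\in B_R$ orthogonally as $a\partial_t + v$, with $v$ tangent to the normal sphere and $|v|$ measured in the round metric $ds^2_{S^{d-1}}$, so that its hyperbolic squared length is $a^2 + \sinh^2(t)|v|^2$. Since $\psi$ rescales the radial coordinate by $r/R$ and preserves $\theta$, $d\psi$ sends this vector to $(ra/R)\partial_t + v$, evaluated at the image point $(rt/R,\theta)$, where its hyperbolic squared length is $(r/R)^2 a^2 + \sinh^2(rt/R)|v|^2$. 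The first assertion therefore reduces to the inequality
\[ \sinh(rt/R) \leq (r/R)\sinh(t)\quad\text{for all } t\in[0,R]. \]
Writing $x=rt/R\leq t$, this is equivalent to $\sinh(x)/x\leq\sinh(t)/t$, and I would deduce it from the elementary fact that $x\mapsto\sinh(x)/x$ is nondecreasing on $(0,\infty)$ (a routine consequence of $x\cosh(x)\geq\sinh(x)$, verified by differentiating).

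For the second assertion I would observe that the induced metric on $S_R^{d-1}$ is $\sinh^2(R)\, ds^2_{S^{d-1}}$, and likewise $\sinh^2(r)\, ds^2_{S^{d-1}}$ on $S_r^{d-1}$. Since $\psi$ acts as the identity on the angular coordinate, $\psi|_{S_R^{d-1}}$ has Lipschitz constant exactly $\sinh(r)/\sinh(R)$. Expanding $\sinh$ in exponentials then reduces $\sinh(r)/\sinh(R)\leq e^{r-R}$ to the inequality $e^{r-2R}\leq e^{-r}$, i.e.\ $r\leq R$, which is the hypothesis. There is no real obstacle here: the only substantive input is the monotonicity of $\sinh(x)/x$, and everything else is coordinate bookkeeping.
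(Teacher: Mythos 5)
Your proof is correct and follows essentially the same route as the paper's: both reduce the first claim to the scalar inequality $\sinh(\lambda t)\leq\lambda\sinh(t)$ for $\lambda=r/R$ (the paper deduces it from convexity of $\sinh$ on $[0,\infty)$ via $\sinh(\lambda t)=\sinh((1-\lambda)\cdot 0+\lambda t)\leq\lambda\sinh(t)$, whereas you use monotonicity of $\sinh(x)/x$ --- two interchangeable elementary facts), and both reduce the second claim to the same exponential manipulation of $\sinh(r)/\sinh(R)$. Your pointwise bound on $\|d\psi\|$ via the orthogonal decomposition $a\,\partial_t+v$ is a slightly more explicit packaging of the paper's remark that $\psi$ is $\lambda$--Lipschitz separately on radial geodesics and on normal spheres.
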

\begin{proof} We start by proving the first statement.  Set $\lambda = r/R$, so that $\psi(t,\theta) = (\lambda t,\theta)$.  Then we are required to prove that $\psi$ is $\lambda$--Lipschitz.  First observe that by construction, $\psi$ is $\lambda$--Lipschitz on each radial geodesic.  It also sends normal spheres to normal spheres, and thus it suffices to prove that for all $t \in (0,R]$, the restriction of $\psi|_{S_t^{d-1}} \colon S_t^{d-1} \to S_{\lambda t}^{d-1}$ is also $\lambda$--Lipschitz.

Observe that the restriction scales lengths precisely by $\sinh(\lambda t)/\sinh(t)$, and so is $\sinh(\lambda t)/\sinh(t)$--Lipschitz map.  To complete the proof, we must show that $\sinh(\lambda t)/\sinh(t) \leq \lambda$, or equivalently, $\sinh(\lambda t) \leq \lambda \sinh(t)$.  This follows immediately from the fact that $\sinh$ is convex on $[0,\infty)$:
\[ \sinh(\lambda t) = \sinh((1-\lambda)0 + \lambda t) \leq (1-\lambda)\sinh(0) + \lambda \sinh(t) = \lambda \sinh(t).\]
This completes the proof of the first statement.

For the second statement, observe that we have already pointed out that $\psi|_{S_R^{d-1}}$ is $\sinh(r)/\sinh(R)$--Lipschitz.  Therefore, since $0 < r < R$, we have the following bound, which completes the proof:

$\hspace{2cm} \displaystyle{ \frac{\sinh(r)}{\sinh(R)} = \frac{e^r}{e^R} \frac{1-e^{-2r}}{1-e^{-2R}} \leq e^{r-R}.}$ \end{proof}

\subsection{Closest point projections} \label{S:closest point projections}

A third construction we will take advantage of is the {\em closest point projection}.   While this is not a strict contraction, it will be useful as an intermediate map.

Given a closed convex set $X \subset \mathbb H^d$, let $\pi \colon \mathbb H^d \to X$ be the closest point project, so that for all $x \in \mathbb H^d$, $\pi(x)$ is the closest point of $X$ to $x$,
\[ d(x,\pi(x)) = \inf_{x' \in X} d(x,x').\]
Convexity of $X$ implies that there exists a unique such closest point $\pi(x) \in X$, and that $\pi$ is continuous.  Since $\pi$ is the identity on $X$, it cannot be a contraction.  Nonetheless, the following well-known property is sufficient for our purposes (see \cite{EM}, Lemma 1.3.4, for example).
\begin{lemma} \label{L:exponential contraction}
For any closed convex set $X \subset \mathbb H^d$,  the closest point projection $\pi \colon \mathbb H^d \to X$ is $1$--Lipschitz.
\end{lemma}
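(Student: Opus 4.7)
The plan is to reduce the desired inequality $d(\pi(x),\pi(y)) \leq d(x,y)$ to the $1$-Lipschitz property of orthogonal projection onto a single geodesic line, using the convexity of $X$ to position $\pi(x)$ and $\pi(y)$ on that line correctly. Given $x, y \in \mathbb H^d$, I would set $p = \pi(x)$ and $q = \pi(y)$ and assume $p \neq q$ (otherwise there is nothing to show). Let $L$ be the geodesic line through $p$ and $q$, parameterized by arc length as $\gamma \colon \mathbb R \to \mathbb H^d$ with $\gamma(0) = p$ and $\gamma(s_0) = q$ for $s_0 = d(p,q) > 0$, and let $\pi_L \colon \mathbb H^d \to L$ denote closest-point projection onto $L$.

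The main step is to show that the foot of the perpendicular from $x$ to $L$ occurs at a parameter $t_x \leq 0$, and symmetrically that the foot from $y$ occurs at a parameter $t_y \geq s_0$. Convexity of $X$ implies $\gamma([0,s_0]) \subset X$, so since $p$ realizes the minimum of $d(x,\cdot)$ on $X$, the function $t \mapsto d(x,\gamma(t))$ restricted to $[0,s_0]$ is minimized at $t = 0$. Combined with the classical hyperbolic identity
\[
\cosh d(x,\gamma(t)) = \cosh(d(x,L)) \, \cosh(t - t_x),
\]
which makes $d(x,\gamma(t))$ strictly convex in $t$ with unique minimum at $t = t_x$, this forces $t_x \leq 0$. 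The analogous inequality $t_y \geq s_0$ follows by the same argument applied to $y$ and $q$. Granted the standard fact that $\pi_L$ is itself $1$-Lipschitz (readily derived from the same cosine identity, or from the $\mathrm{CAT}(0)$ property of $\mathbb H^d$), one then concludes
\[
d(p,q) = s_0 \leq t_y - t_x = d(\pi_L(x),\pi_L(y)) \leq d(x,y).
\]

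The principal subtlety will be the first step: because $L$ may leave $X$ beyond $p$ or $q$, convexity of $X$ alone does not constrain the foot of the perpendicular from $x$ to the entire line $L$, only to the segment $[p,q]$. This is resolved by the strict convexity of $t \mapsto d(x,\gamma(t))$, which is a feature of the negative curvature of $\mathbb H^d$. Once both perpendicular feet have been located on the correct sides of the respective projection points, the conclusion follows by routine application of the $1$-Lipschitz property of projection onto a geodesic.
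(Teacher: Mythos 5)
The paper does not prove this lemma; it cites Epstein--Marden \cite{EM}, Lemma 1.3.4, which establishes the stronger statement that nearest-point retraction onto a convex set contracts \emph{exponentially} in the distance to the set (hence the internal label ``exponential contraction''). Your argument is a correct, self-contained proof of the $1$--Lipschitz statement and follows a standard route: reduce to orthogonal projection $\pi_L$ onto the geodesic line $L$ through $p=\pi(x)$ and $q=\pi(y)$; use convexity of $X$ together with the hyperbolic Pythagorean identity $\cosh d(x,\gamma(t)) = \cosh(d(x,L))\cosh(t-t_x)$ to locate the feet $\gamma(t_x),\gamma(t_y)$ of the perpendiculars on the far sides of $p$ and $q$; and then invoke the $1$--Lipschitz property of $\pi_L$. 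Two small remarks. First, you appeal to ``strict convexity'' of $t\mapsto d(x,\gamma(t))$, but all that is actually needed (and all that the displayed identity immediately gives) is that this function strictly decreases for $t<t_x$ and strictly increases for $t>t_x$; no discussion of convexity is required. Second, since $\mathbb H^d$ is CAT(0), one could instead just quote the general fact that closest-point projection to a closed convex subset of a CAT(0) space is $1$--Lipschitz (Bridson--Haefliger, Prop.\ II.2.4), which is essentially the comparison-triangle form of your argument; but the explicit hyperbolic-trigonometric derivation is equally appropriate here and has the virtue of being elementary and self-contained.
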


\section{Right-angled polyhedra construction} \label{S:RA polyhedra}

\subsection{Right-angled reflection groups} \label{S:RACG}

A compact, right-angled hyperbolic polyhedron $Q \subseteq \mathbb H^d$ is a compact polyhedron such that every dihedral angle is $\pi/2$.   All right-angled polyhedra that we consider will be compact, so we will drop that adjective in what follows.

Given a right-angled hyperbolic polyhedron $Q \subseteq \mathbb H^d$, let $\mathcal F(Q)$ denote the set of $d-1$--dimensional faces of $Q$.  For each $F \in \mathcal F(Q)$, let $\tau_F$ denote the hyperbolic reflection in the hyperbolic hyperplane containing $F$.  We define the {\em right-angled reflection group} $\bar \Gamma_Q \leq \OO_0(d,1)$ to be the group generated by $\{\tau_F\}_{F \in \mathcal F(Q)}$, and the {\em right-angled rotation (sub)group} to be $\Gamma_Q = \bar \Gamma_Q \cap \SO_0(d,1)$.
The group $\bar \Gamma_Q$ is a right-angled Coxeter group and has presentation
\begin{equation} \label{E:presentation} \langle \{\tau_F\}_{F \in \mathcal F(Q)} \mid \tau_F^2 = 1  \mbox{ and } [\tau_F,\tau_{F'}]=1 \mbox{ if } F \cap F' \neq \emptyset \rangle
\end{equation}
The determinant $\det \colon \bar \Gamma_Q \to \{ \pm 1 \}$ can be expressed in terms of generators by $\det(\tau_F) = -1$, for all $F \in \mathcal F(Q)$, and by definition, $\ker(\det) = \Gamma_Q$.

A fundamental domain for $\Gamma_Q$ is obtained as $Q \cup \tau_F(Q)$, for any $F \in \mathcal F(Q)$.  Consequently, $\Gamma_Q \! \lmod \mathbb H^d$ is obtained by ``doubling'' $Q$ over its boundary, and the covolume of $\Gamma_Q$ is easily computed from the volume of $Q$:
\[ \Vol(\Gamma_Q \! \lmod \mathbb H^d) = 2 \Vol(Q).\]
The $\bar \Gamma_Q$--translates of $Q$ determine a $\bar \Gamma_Q$--invariant tessellation of $\mathbb H^d$.  Let $\mathcal P(Q)$ denote the $\bar \Gamma_Q$--invariant set of hyperbolic hyperplanes containing the $d-1$--dimensional faces of the $\bar \Gamma_Q$--translates of $Q$.  The union of the hyperplanes in $\mathcal P(Q)$ is precisely the union of all $d-1$--dimensional faces of the $\bar \Gamma_Q$--translates of $Q$, and since the tessellation is locally finite, so the collection of hyperplanes in $\mathcal P(Q)$ is locally finite.

\subsection{Convex hulls} \label{S:Hulls}

For a subset $X \subset \mathbb H^d$, its convex hull is the intersection of all hyperbolic half-spaces containing $X$.  We will be interested in the {\em $Q$--convex hull of $X$}, denoted $\mathcal H_Q(X)$, defined as the intersection of the half-spaces containing $X$, bounded by hyperbolic hyperplanes in $\mathcal P(Q)$.  These are the key ingredient in Agol-Long-Reid's work \cite{AgolLongReid}, building on work of Scott \cite{ScottSep}.  We will need the following lemma which is essentially due to Agol-Long-Reid (indeed, a version of this can be viewed as a special case of the claim on page 606 of \cite{AgolLongReid}).  We sketch a proof for completeness.

\begin{lemma} \label{L:finite Q-hull} For any compact right-angled hyperbolic polyhedron $Q$ there exists $R = R(Q) > 0$ with the following property.  If $X \subseteq \mathbb H^d$ is any compact, convex set, then $\mathcal H_Q(X)$ is contained in the $R$--neighborhood of $X$.  Consequently, $\mathcal H_Q(X)$ is a compact, right-angled hyperbolic polyhedron which is a union of finitely many $\bar \Gamma_Q$--translates of $Q$.
\end{lemma}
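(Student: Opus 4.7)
The plan is to prove this in two stages: first, establish that $\mathcal H_Q(X)$ is a compact union of finitely many cells of the tessellation; then extract the uniform bound $R=R(Q)$.

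For the first stage, I would use that the hyperplanes in $\mathcal P(Q)$ comprise exactly the union of the $(d-1)$-faces of the cells $gQ$, and hence none meets the interior of any cell. This implies that if the interior of $gQ$ meets the closed convex set $\mathcal H_Q(X)$, then the whole cell $gQ$ lies in $\mathcal H_Q(X)$, so $\mathcal H_Q(X)$ is a union of closed cells. To show this union is finite, I would suppose for contradiction that $\mathcal H_Q(X)$ contains a generic geodesic ray $\rho$. Since each cell has diameter at most $D := \diam(Q)$, every subsegment of $\rho$ of length greater than $D$ crosses a face hyperplane, so $\rho$ crosses infinitely many hyperplanes of $\mathcal P(Q)$. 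Each such crossed hyperplane $P$ must meet $X$, because otherwise $P \in \mathcal P_X$ (so $\mathcal H_Q(X) \subseteq H_P^X$), and $\rho$ crossing $P$ transversely would exit the closed half-space $H_P^X$. Compactness of $X$ combined with local finiteness of $\mathcal P(Q)$ limits the hyperplanes meeting $X$ to a finite set, producing the needed contradiction.

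For the uniform bound $R=R(Q)$, I would argue by contradiction: if none exists, I can choose compact convex sets $X_n$ and points $y_n \in \mathcal H_Q(X_n)$ with $d(y_n,X_n)\to\infty$. Using the $\bar\Gamma_Q$-equivariance of $\mathcal H_Q$, I translate each pair $(X_n,y_n)$ by an element of $\bar\Gamma_Q$ so that $y_n \in Q$ for every $n$. Convexity of $\mathcal H_Q(X_n)$ places the geodesic $\gamma_n$ from $y_n$ to its nearest point in $X_n$ inside $\mathcal H_Q(X_n)$, and its length diverges with $n$. The first face hyperplane $P_1$ that $\gamma_n$ crosses on exiting $Q$ is one of the finitely many face hyperplanes of $Q$, so after passing to a subsequence I may assume $P_1$ is fixed, and the first-stage argument then forces $P_1 \cap X_n \neq \emptyset$. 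Iterating into the adjacent cell, then the next, and so on, produces a chain of cells $Q=g_0 Q, g_1 Q, g_2 Q,\ldots$ traversed by $\gamma_n$, whose consecutive interface hyperplanes each meet $X_n$.

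The hard part, and the step where the right-angled hypothesis is essential, is to show that this chain must terminate after some uniformly bounded number of steps $N=N(Q)$. Once such $N$ is in hand, some interface hyperplane along the chain is forced to lie in $\mathcal P_{X_n}$ with $X_n$ on the far side from $Q$, contradicting $y_n \in \mathcal H_Q(X_n) \subseteq H_P^{X_n}$. The right-angled structure enters because any two intersecting hyperplanes of $\mathcal P(Q)$ must meet orthogonally, and this severely restricts the possible combinatorial patterns of cells and interfaces in any fixed combinatorial neighborhood of $Q$, leaving only finitely many types. This uniform combinatorial bound $N$ then yields $R \leq N\cdot D$, completing the argument.
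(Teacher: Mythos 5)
Your first stage (that $\mathcal H_Q(X)$ is a union of closed cells, and the observation that any hyperplane transversely crossed by a geodesic lying in $\mathcal H_Q(X)$ must meet $X$) is sound. But the second stage has a genuine gap, and it is exactly at the step you flag as the ``hard part.''

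The claim that the chain of cells traversed by $\gamma_n$ must terminate after a uniformly bounded number of steps $N=N(Q)$ is false, and the proposed justification via right-angled combinatorics cannot rescue it. You have already established that every interface hyperplane $P_i$ along the chain meets $X_n$; but the number of hyperplanes of $\mathcal P(Q)$ meeting a compact convex set $X_n$ is not bounded independently of $X_n$. If you take $X_n$ to be a large ball centered far from $Q$, then $d(y_n,X_n)\to\infty$, the chain has unbounded length, and \emph{every} interface hyperplane can genuinely intersect $X_n$: nothing in the combinatorics forbids this. Right-angledness constrains local incidence patterns, but it places no bound on how many cells a geodesic can cross before reaching a large target, so no ``uniform combinatorial bound $N$'' exists, and the contradiction you aim for (forcing some interface hyperplane into $\mathcal P_{X_n}$) cannot be extracted this way. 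In fact the right-angled hypothesis is a red herring for this lemma; the paper's proof never uses it, and the result (in the spirit of Agol--Long--Reid and Scott) holds for any cocompact hyperbolic reflection group.

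The idea you are missing is quantitative control on the \emph{angle} at which $\gamma_n$ meets the hyperplanes, not on the \emph{number} of hyperplanes met. The paper first observes, by cocompactness and a limiting argument, that there are constants $\epsilon>0$ and $R_0>0$ so that every geodesic segment of length $\geq R_0$ crosses some hyperplane of $\mathcal P(Q)$ at angle $\geq\epsilon$. It then invokes a piece of elementary hyperbolic trigonometry: there is $R_1=R_1(\epsilon)$ so that if a segment of length $\geq R_1$ crosses a hyperplane $P$ at angle $\geq\epsilon$, then the hyperplane $P'$ orthogonal to the segment at the far endpoint is disjoint from $P$. Taking $R=R_0+R_1$, if $d(x,X)>R$ and $\sigma$ is the minimizing geodesic from $x$ to $X$ hitting $X$ at $y$, the initial length-$R_0$ piece crosses some $P\in\mathcal P(Q)$ at angle $\geq\epsilon$, the remaining length-$\geq R_1$ piece guarantees the orthogonal supporting hyperplane $P'$ at $y$ is disjoint from $P$, and since $X$ lies on the far side of $P'$ while $P$ lies on the near side, $P$ separates $x$ from $X$, so $x\notin\mathcal H_Q(X)$. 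This angle/divergence mechanism is what replaces your unboundable chain count. Your contradiction-and-translation framing could be adapted to this, but as written the proposal does not reach a proof.
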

\begin{proof} The conclusion in the last sentence is clear from the first claim and so we prove the first claim. 

Observe that there exists $\epsilon > 0$ and $R_0 > 0$ so any geodesic segment of length at least $R_0$ intersects some hyperplane in $\mathcal P(Q)$ in an angle at least $\epsilon$.  To see this, note that if there were no such $\epsilon$ and $R_0$, then the limit of a sequence of counterexamples provides a geodesic with no transverse intersections with any hyperplane in $\mathcal P(Q)$, which is absurd.

From elementary hyperbolic geometry, we see that there exists $R_1> 0$ (depending on $\epsilon$) so that a geodesic segment $\sigma$ of length at least $R_1$ intersecting a hyperbolic hyperplane $P$ at an angle at least $\epsilon$ must have at least one end point with the property that the hyperbolic hyperplane $P'$ through that endpoint, orthogonal to $\sigma$ must have $P \cap P' = \emptyset$.
Set $R = R_0+R_1$.  

Suppose $x \in \mathbb H^d$ has distance greater than $R$ to $X$, let $\sigma$ be a shortest geodesic segment from $x$ to $X$, meeting $X$ at the endpoint $y$ of $\sigma$.  The initial subsegment of $\sigma$ containing $x$ of length $R_0$ meets some hyperplane $P \in \mathcal P(Q)$ at a point $z$ and making an angle greater than $\epsilon$.  The subsegment of $\sigma$ from the point of intersection $z$ to $y \in X$ has length at least $R_1$.  Therefore the hyperplane $P'$ through that point in $X$, orthogonal to $\sigma$, must be disjoint from $P$.  Since $\sigma$ connects $x$ to $y$, it follows from convexity that $X$ is contained in the half-space determined by $P'$ (not containing $x$).  Consequently, $X$ is in the half-space bounded by $P$, not containing $x$, and so $x \not \in \mathcal H_Q(X)$, completing the proof.  See Figure~\ref{F:Qhull}.\end{proof}

\begin{figure} [htb]
\labellist
\small\hair 2pt
\pinlabel $x$ [b] at 15 45
\pinlabel $y$ [b] at 90 70
\pinlabel $z$ [b] at 38 55
\pinlabel $\sigma$ [b] at 64 57
\pinlabel $X$ [b] at 125 80
\pinlabel $P$ [b] at 60 100
\pinlabel $P'$ [b] at 115 38
\pinlabel $^{>\epsilon}$ [b] at 61 62
\endlabellist
\begin{center}
\includegraphics[scale=0.75]{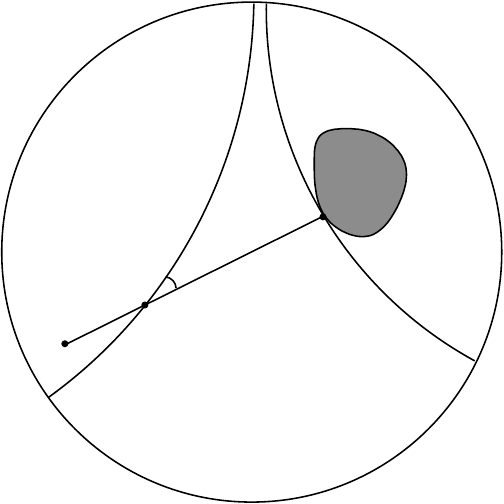}
\caption{\label{F:Qhull} Cartoon of Lemma~\ref{L:finite Q-hull}.}
\end{center}
\end{figure}

\begin{lemma} \label{L:bounded diameter faces} Given a right-angled polyhedron $Q$, let $R = R(Q) > 0$ be the constant from Lemma~\ref{L:finite Q-hull} and $D = D(Q) = 2R+4$.  If $X \subseteq \mathbb H^d$ is any compact, convex set and $r \geq 1$, then the diameter of any face of $\mathcal H_Q(N_r(X))$ is at most $D$.
\end{lemma}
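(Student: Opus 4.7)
The plan is to bound the diameter of a face $F$ of $H = \mathcal{H}_Q(N_r(X))$ lying on a hyperplane $P \in \mathcal{P}(Q)$ by exhibiting a distinguished point $p_0 \in P$ within distance $R+2$ of every point of $F$, so that $\diam(F) \leq 2(R+2) = D$. The distinguished point is the closest point of $P$ to $X$, and the argument compares $P$ to a supporting hyperplane of $X$ via standard hyperbolic trigonometry.

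First I would choose $p_0 \in P$ and $x_0 \in X$ realizing $d_0 := d(P,X)$. Since $N_r(X) \subseteq P^+$ (the closed half-space bounded by $P$ that contains $H$), we have $d_0 \geq r \geq 1$. Let $T$ denote the hyperplane through $x_0$ perpendicular to the geodesic segment $[p_0, x_0]$. Because $x_0$ is the closest point of the convex set $X$ to the external point $p_0$, $T$ supports $X$ with $X \subseteq T^+$ and $P \subseteq T^-$; moreover $P$ and $T$ share $[p_0, x_0]$ as a common perpendicular and are therefore ultraparallel at distance $d_0$.

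For any $p \in F$ with $s := d(p_0, p)$, I would work in the 2-dimensional geodesic plane $\Pi$ spanned by $p$, $p_0$, $x_0$. Inside $\Pi$, the geodesics $P \cap \Pi$ and $T \cap \Pi$ are ultraparallel at distance $d_0$, and the standard Lambert quadrilateral identity yields
$$\sinh(d_\Pi(p, T \cap \Pi)) = \sinh(d_0)\cosh(s).$$
The ambient distance $d(p, T)$ coincides with $d_\Pi(p, T \cap \Pi)$: for $d \geq 3$, the $\pi$-rotation of $\mathbb{H}^d$ about $\Pi$ fixes $p$ pointwise and preserves $T$ (since the tangent space $T_{x_0}T$ is invariant under this rotation), so by uniqueness the foot of the ambient perpendicular from $p$ to $T$ must lie in $\Pi$; for $d = 2$ the identification is immediate. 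Combining $d(p,X) \geq d(p,T)$ (from $X \subseteq T^+$, $p \in T^-$) with the bound $d(p,X) \leq R+r$ obtained from Lemma~\ref{L:finite Q-hull} applied to $F \subseteq H \subseteq N_R(N_r(X)) = N_{R+r}(X)$, I obtain $\sinh(d_0)\cosh(s) \leq \sinh(R+r)$.

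Using $d_0 \geq r \geq 1$ and monotonicity of $\coth$, this gives $\cosh(s) \leq \sinh(R+r)/\sinh(r) = \cosh R + \sinh R \coth r \leq \cosh R + 2\sinh R \leq \tfrac{3}{2}e^{R}$, since $\coth 1 < 2$. Hence $e^s \leq 2\cosh(s) < 3e^R$, so $s < R + \log 3 < R+2$, and the triangle inequality yields $\diam(F) \leq 2(R+2) = 2R+4 = D$. The main technical subtlety is the step identifying $d(p,T)$ with $d_\Pi(p, T\cap\Pi)$, which rests on the $\pi$-rotation symmetry about $\Pi$; the hypothesis $r \geq 1$ is essential because $\coth r \to \infty$ as $r \to 0$, so no uniform additive bound of the form $2R+c$ could hold without it.
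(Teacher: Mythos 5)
Your proof is correct, but it runs along a genuinely different track than the paper's. The paper never uses the hyperplane carrying the face: it takes a segment $\sigma$ of maximal length in a face, splits it at the point $x$ closest to $X$, and combines two coarse $\log(1+\sqrt{2})$--type comparison estimates (a slim-triangle inequality at the non-acute angle, and a comparison of $d(y,z)$ with the distance to the supporting hyperplane $\Sigma$ through $y$) with the containment $\mathcal H_Q(N_r(X)) \subseteq N_{R+r}(X)$ to get $\ell(\sigma) \leq 2R+4$. You instead exploit that the face lies on a defining hyperplane $P \in \mathcal P(Q)$ with $N_r(X) \subseteq P^+$, take the common perpendicular from $X$ to $P$, and use the exact ultraparallel distance identity $\sinh d(p,T) = \sinh(d_0)\cosh(s)$ against the same $N_{R+r}(X)$ containment; this pins every point of the face within $R+\log 3 < R+2$ of the foot $p_0$, hence diameter at most $D = 2R+4$. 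Your route is sharper in that it produces a canonical center and an explicit radius bound, and it makes the role of $r \geq 1$ transparent (via $\coth r \leq \coth 1 < 2$, versus the paper's $r \geq 1 > \log(1+\sqrt 2)$); the paper's route avoids any reference to the facet hyperplane and hence needs no reduction to a $2$--plane. Your handling of the one real subtlety — that $d(p,T)$ equals $d_\Pi(p, T\cap \Pi)$, via the $\pi$--rotation about $\Pi$ preserving $T$ and uniqueness of the perpendicular foot — is sound (one can also verify the identity directly in the hyperboloid model in all dimensions). Two small points you should make explicit: each facet of the hull does lie on a hyperplane of $\mathcal P(Q)$ whose half-space contains $N_r(X)$ (this uses local finiteness of $\mathcal P(Q)$ and the definition of $\mathcal H_Q$), and lower-dimensional faces are contained in facets, so bounding facet diameters suffices.
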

Appealing to Lemma~\ref{L:finite Q-hull}, Lemma~\ref{L:bounded diameter faces} is essentially asserting a(n explicit) bound on the length of a geodesic that lies in the ``shell'' between $N_r(X)$ and $N_{r+R}(X)$.
\begin{proof}  From Lemma~\ref{L:finite Q-hull} we have
\[ \mathcal H_Q(N_r(X)) \subseteq N_R(N_r(X)) = N_{R+r}(X),\]
for all $r > 0$.  Let $\sigma$ be a segment in some face of $\mathcal H_Q(N_r(X))$ whose length is equal to the maximal diameter of any face of $\mathcal H_Q(N_r(X))$.  Let $x \in \sigma$ and $y \in X$ realize the minimum distance between $\sigma$ and $X$.  Subdividing $\sigma$ at $x$, let $\sigma_0 \subseteq \sigma$ be the longer of the two resulting segments, and let $z$ denote its other endpoint (so $d(x,z) = \ell(\sigma_0) \geq \ell(\sigma)/2$).  Then $x,y,z$ are the vertices of a hyperbolic triangle with non-acute angle at $x$.  A computation for the extremal case in $\mathbb H^2$ (essentially the same computation to prove that triangles are $\log(1+\sqrt{2})$--slim) shows that
\[ d(y,x) + d(x,z) \leq d(y,z) + \log(1+\sqrt{2}) < d(y,z) + 1.\]

Next, let $\Sigma$ be the hyperbolic hyperplane through $y$, orthogonal to the geodesic from $x$ to $y$.  Then $X$ and $z$ lie on opposite side of $\Sigma$ so that $d(\Sigma,z) \leq d(X,z)$.  On the other hand, since $d(y,x) \geq r \geq 1 > \log(1+\sqrt{2})$, a computation in $\mathbb H^2$ shows that $d(y,z) \leq d(\Sigma,z) + \log(1+\sqrt{2}) < d(\Sigma,z) + 1$ (see Figure~\ref{F:projection_bound} for the extreme case).

\begin{figure}[htb]
\begin{center}
\begin{tikzpicture}[xscale=2,yscale=2]
\draw[samples=40, domain=-.5*pi:-.25*pi] plot ({cos(\x r)},{sqrt(2)+sin(\x r)});
\draw[samples=40, domain=.75*pi:pi] plot ({sqrt(2)+cos(\x r)},{sin(\x r)});
\draw (-1,0) -- (1,0);
\draw (0,0) -- (0,{sqrt(2)-1});
\draw (0,0) circle [radius=1];
\draw [fill] (0,0) circle [radius=.02];
\draw [fill] (0,{sqrt(2)-1}) circle [radius=.02];
\draw [fill] ({sqrt(2)-1},0) circle [radius=.02];
\draw [fill] ({1/sqrt(2)-.01},{1/sqrt(2)-.01}) circle [radius=.02];
\node at (-.75,-.1) {$\Sigma$};
\node at (.35,.6) {$\sigma_0$};
\node [above] at (0,{sqrt(2)-1}) {$x$};
\node [below] at ({sqrt(2)-1},.04) {$y'$};
\node [below] at (0,0) {$y$};
\node at ({1/sqrt(2)+.06},{1/sqrt(2)+.06}) {$z$};
\end{tikzpicture}
\caption{\label{F:projection_bound} Closest point $y' \in \Sigma$ to $z$ when $d(x,z)>>0$ and $d(y,x) = \log(1+\sqrt{2})$.}
\end{center}
\end{figure}
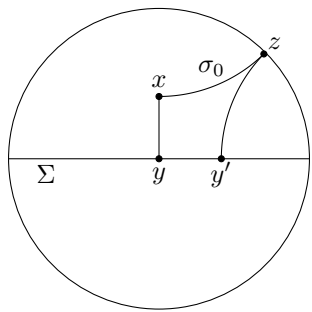

Now, since $z \in N_{r+R}(X)$, we have
\[ d(y,z) \leq d(\Sigma,z) + 1 \leq d(X,z) + 1 \leq r+R+1.\]
Combining the inequalities above, we obtain
\[ r + d(x,z) \leq d(y,x) + d(x,z) \leq d(y,z) + 1 \leq r + R + 2\]
and since $\ell(\sigma) \leq 2 \ell(\sigma_0) = 2 d(x,z)$, we have
\[ \ell(\sigma) \leq 2d(x,z) \leq 2R + 4 = D.\]
This completes the proof.
\end{proof}

\subsection{Lebesgue numbers} \label{S:Lebesgue numbers}

For any right-angled polyhedron $Q \subset \mathbb H^d$, consider $\pi \colon \mathbb H^d \to Q$, the closest point projection.  To better understand this, consider the set of hyperplanes $P_1, \ldots, P_k \in \mathcal P(Q)$ containing the $d-1$--dimensional faces of $Q$.  These divide $\mathbb H^d$ into a collection of convex regions which we denote $\mathcal R$.  There is one compact region, namely $Q$, and the rest are regions  ``opposite $Q$ across a face $\sigma$'', denoted $\mathcal R_\sigma$, where $\sigma$ is a face of any dimension.  See Figure~\ref{F:2d example} for a $2$--dimensional example.  The region $\mathcal R_\sigma$ opposite $Q$ across the face $\sigma$ is the closure of the preimage of $int(\sigma)$, the interior of $\sigma$.  For a face $\sigma$, we have
\[ \pi^{-1}(\sigma) = \bigcup_{\sigma_0 \subset \sigma} \mathcal R_{\sigma}\]
where the union is over all faces $\sigma_0$ of $\sigma$.  For a $d-1$--dimensional face $F \in \mathcal F(Q)$, $\pi^{-1}(F)$ is the half-space $\mathcal H_F$ bounded by the unique hyperplane containing $F$ and {\em not} containing $Q$.

\begin{figure} [htb]
\labellist
\small\hair 2pt
\pinlabel $F$ [b] at 50 220
\pinlabel $\mathcal H^1_F$ [b] at 205 235
\pinlabel $\mathcal R_F$ [b] at 130 240
\endlabellist
\begin{center}
\includegraphics[scale=0.6]{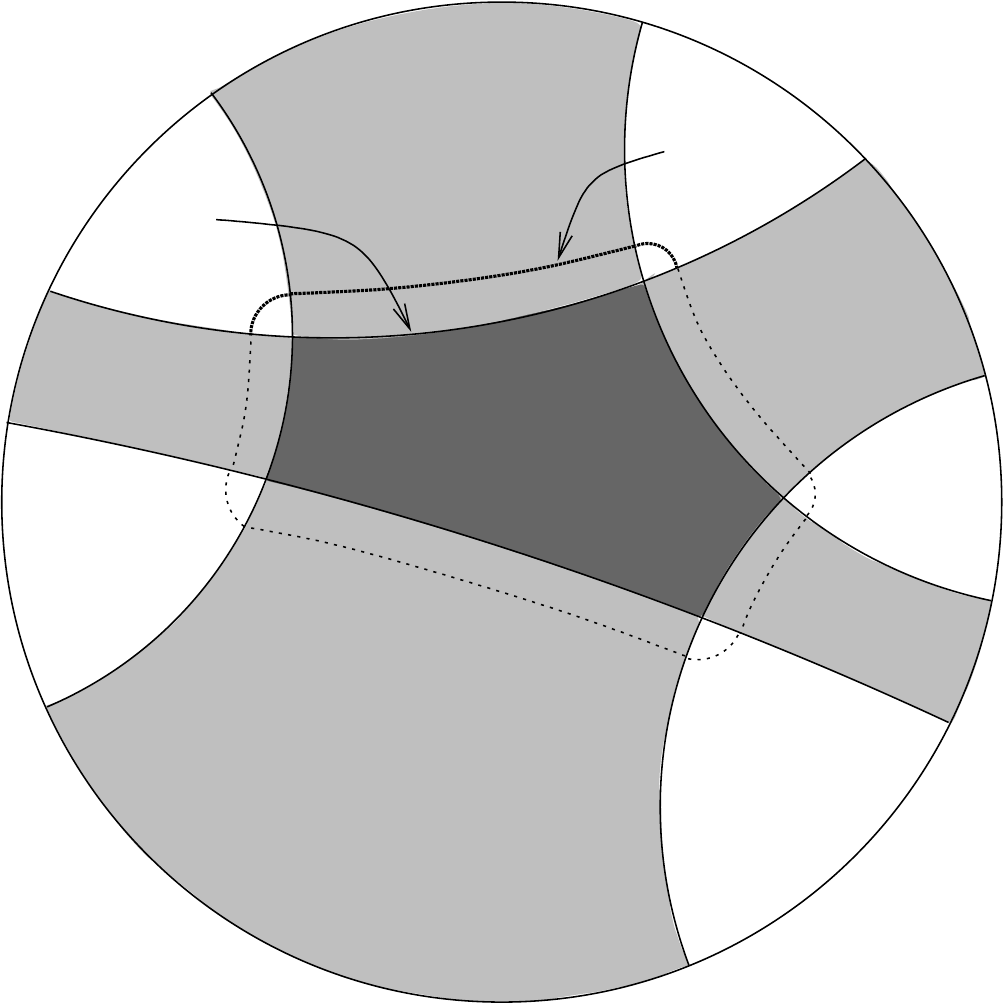}
\caption{\label{F:2d example} The dark shaded object is a right-angled pentagon $Q$.  The lines through the top-dimensional faces divide $\mathbb H^2$ into 11 regions.  The lighter shaded regions are those opposite the pentagon across the edges, and the remaining (white) regions are opposite the pentagon across the vertices.  The dotted curve represents $\partial N_1(Q)$, with the preimage of a face $F$, $\mathcal H^1_F$, represented as a darker arc.}
\end{center}
\end{figure}

We will be interested the restriction of $\pi$ to $\partial N_1(Q)$, the boundary of the $1$--neighborhood of $Q$.  With that in mind, set
\[ \mathcal H^1_F = \mathcal H_F \cap \partial N_1(Q) = \pi^{-1}(F)  \cap \partial N_1(Q).\]
The interiors of these sets form an open cover of $\partial N_1(Q)$, and we let $L(Q)$ denote at Lebesgue number of this cover.  That is, for any point $z \in \partial N_1(Q)$, the ball of radius $L(Q)$ about $z$ (in the path metric on $\partial N_1(Q)$) is contained in some set in $\{ \mathcal H_F^1 \}_{F \in \mathcal F(Q)}$.  In fact, there is a Lebesgue number $L$ that works for all polyhedra $Q$ of a given dimension.
\begin{lemma} \label{L:Lebesgue number} For any $d$ there exists $L>0$, so that for any $d$--dimensional right-angled polyhedron $Q$, $L$ is a Lebesgue number for $\{ \mathcal H_F^1 \}_{F \in \mathcal F(Q)}$,
\end{lemma}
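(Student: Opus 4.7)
The plan is to find an explicit lower bound $L > 0$, depending only on $d$, on the distance from $z$ to at least one of the hyperplanes $P_i$ bounding the nearby half-spaces $\mathcal H_{F_i}$, and then pass from an ambient ball of radius $L$ around $z$ to an intrinsic ball in $\partial N_1(Q)$.

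For $z \in \partial N_1(Q)$, I would let $\sigma$ be the smallest face of $Q$ containing $\pi(z)$, set $k = \operatorname{codim}(\sigma) \in \{1,\dots,d\}$, and let $F_1,\dots,F_k$ be the codimension-one faces of $Q$ containing $\sigma$, with hyperplanes $P_1,\dots,P_k$. Iterating the right-angled condition along codimension-two faces shows that the $P_i$ are pairwise orthogonal at every common point, so their unit normals $\vec n_1,\dots,\vec n_k$ at $\pi(z)$ form an orthonormal basis for the normal space to $T_{\pi(z)}\sigma$ inside $T_{\pi(z)}\mathbb H^d$. Because $\pi(z)$ is the closest point of $Q$ to $z$, the geodesic from $\pi(z)$ to $z$ meets $Q$ orthogonally, so its unit tangent $\vec v$ at $\pi(z)$ lies in $\operatorname{span}(\vec n_1,\dots,\vec n_k)$; writing $\vec v = \sum_i c_i \vec n_i$ with $\sum c_i^2 = 1$, pigeonhole yields an index $i$ with $|c_i| \geq 1/\sqrt{k} \geq 1/\sqrt{d}$.

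For this $i$, I would apply the hyperbolic right-triangle identity to the triangle with vertices $\pi(z)$, $z$, and the foot of the perpendicular from $z$ to $P_i$: the hypotenuse has length $1$ and the angle $\alpha$ at $\pi(z)$ satisfies $\sin\alpha = |c_i|$, so $\sinh d(z,P_i) = \sinh(1)\,|c_i|$, giving
\[
d(z,P_i) \;\geq\; \operatorname{arcsinh}\!\left(\tfrac{\sinh(1)}{\sqrt d}\right) \;=:\; L.
\]
Since $\pi(z)\in F_i$ forces $z\in\pi^{-1}(F_i)=\mathcal H_{F_i}$, the bound on $d(z,P_i)$ places the ambient ball $B_L(z)\subset\mathbb H^d$ entirely on the $z$-side of $P_i$, hence inside $\mathcal H_{F_i}$. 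The path metric on $\partial N_1(Q)$ dominates the ambient hyperbolic metric, so the intrinsic $L$-ball about $z$ in $\partial N_1(Q)$ is contained in $B_L(z)\cap\partial N_1(Q)\subseteq\mathcal H^1_{F_i}$, showing $L$ is a uniform Lebesgue number. The only substantive step is the identification of $\vec v$ with a vector in $\operatorname{span}(\vec n_i)$ and the orthonormality of $\{\vec n_i\}$; this is precisely where the right-angled hypothesis enters, while the pigeonhole, the right-triangle identity, and the intrinsic-versus-extrinsic comparison are routine.
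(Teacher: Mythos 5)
Your argument is correct, and it is genuinely different from the one in the paper. The paper works entirely inside $\partial N_1(Q)$ with the path metric: it first observes that $\mathcal H^1_F$ is the closed $\tfrac{\pi\sinh 1}{2}$--neighborhood of $\mathcal R^1_F = \mathcal R_F\cap\partial N_1(Q)$ (because the connecting geodesics in $\partial N_1(Q)$ are quarter circles of hyperbolic radius $1$), and then analyzes the local product structure $\mathcal R_\sigma\cap\partial N_1(Q)\cong \sigma\times(\text{right-angled spherical simplex of radius }\sinh 1)$ to conclude that the $\Delta$--neighborhoods of the sets $\mathcal R^1_F$ cover $\partial N_1(Q)$ for the sharp $\Delta$, yielding the optimal constant $L=\sinh(1)\arccos\sqrt{\tfrac{d-1}{d}}=\sinh(1)\arcsin\tfrac{1}{\sqrt d}$. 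You instead work in the ambient metric: decompose the unit tangent $\vec v$ of $[\pi(z),z]$ in the orthonormal frame of facet normals (which is where right-angledness and simplicity of $Q$ enter), pigeonhole to find a coefficient $|c_i|\ge 1/\sqrt d$, and use the hyperbolic right-triangle identity $\sinh d(z,P_i)=\sinh(1)\sin\alpha$ to get $d(z,P_i)\ge\operatorname{arcsinh}(\sinh(1)/\sqrt d)$; the final reduction from intrinsic to ambient ball via ``path metric dominates ambient metric'' is correct. Your approach avoids the intrinsic analysis of $\partial N_1(Q)$ entirely, at the cost of a slightly smaller (non-optimal) constant: $\operatorname{arcsinh}(\sinh(1)/\sqrt d)<\sinh(1)\arcsin(1/\sqrt d)$, which is all the lemma requires. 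Two small things worth flagging explicitly, though neither is a gap: the identification of the angle $\alpha$ in the right triangle $\pi(z),z,y$ with $\arcsin|c_i|$ uses that the perpendicular foot $y$ lies in the two-plane spanned by $\vec v$ and $\vec n_i$ (which follows by reflecting in that plane, since it is orthogonal to $P_i$); and the fact that exactly $k=\operatorname{codim}\sigma$ facets contain $\sigma$ uses that a compact right-angled hyperbolic polyhedron is simple, which follows from the at-most-$d$-pairwise-orthogonal-hyperplanes constraint.
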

\begin{proof} Consider a $d$--dimensional right-angled polyhedron $Q$, and for $F \in \mathcal F(Q)$, set
\[ \mathcal R^1_F = \mathcal R_F \cap \partial N_1(Q). \]
Then $\mathcal H^1_F$ is the (closed) $\frac{\pi \sinh(1)}2$--neighborhood of $\mathcal R^1_F$.
Indeed, for any point $z \in \partial \mathcal R^1_F$, the distance from $z$ to $\partial \mathcal H^1_F = \partial \mathcal H_F \cap \partial N_1(Q)$ is $\frac{\pi \sinh(1)}{2}$, with every geodesic in $\partial N_1(Q)$ from $z$ to $\partial \mathcal H^1_F$ being a quarter of a circle of radius $1$ in hyperbolic space.

Now suppose $0 < L \leq \frac{\pi \sinh(1)}{4}$, set $\Delta = \frac{\pi \sinh(1)}{2} - L$, and suppose that the (closed) $\Delta$--neighborhoods $\{N_\Delta(\mathcal R^1_F)\}_{F \in \mathcal F(Q)}$ cover $\partial N_1(Q)$.  Then given any $z \in \partial N_1(Q)$, there exists $F \in \mathcal F(Q)$ such that $z \in N_\Delta(\mathcal R^1_F)$.  Note that the distance in $\partial N_1(Q)$ from $z$ to $\partial \mathcal H^1_F$ is at least $L$, and hence the $L$--ball about $z$ in $\partial N_1(Q)$ is contained in $\mathcal H^1_F$.  Consequently $L$ is a Lebesgue number for $\{ \mathcal H_F^1 \}_{F \in \mathcal F(Q)}$.

From this, it is easy to see that $L = \frac{\pi \sinh(1)}4$ is the optimal Lebesgue number for $d = 2$.  In general, if $\sigma$ is the intersection of $n \geq 2$ of the $d-1$--dimensional faces $F_1,\ldots,F_n \in \mathcal F(Q)$, then $\mathcal R_\sigma \cap \partial N_1(Q)$ is isometric to the product of $\sigma$ (with the metric scaled by $\cosh(1)$) and a right-angled spherical $n-1$--simplex in a (Euclidean) sphere of radius $\sinh(1)$.  For each simplex slice of this product, the $n$ vertices are the points of intersection with $\mathcal R^1_{F_1},\ldots,\mathcal R^1_{F_n}$, and distance to $\mathcal R^1_{F_i}$ is precisely the distance to the $i^{th}$ vertex.  Therefore, we can take $L$ as the maximum distance from a point in such a spherical $n-1$--simplex to the union of the vertices (maximized over all $n \leq d$).   Explicitly, $L = \sinh(1) \arccos \sqrt{\tfrac{d-1}{d}}$ is the optimal Lebesgue number.
\end{proof}

\subsection{The construction} \label{S:RACG theorem}

Let $Q,Q'$ be two compact right-angled hyperbolic polyhedra.  We assume throughout (after translating $Q$ if necessary) that there is a point $x \in int(Q) \cap int(Q')$, for convenience.   For $r > 0$, let $Q_r = \mathcal H_Q(B_r(x))$.  Later we will also want to consider $Q_r' = \mathcal H_{Q'}(B_r(x))$.   Define
\[ \bar \Gamma = \bar \Gamma_Q, \, \bar \Gamma_r = \bar \Gamma_{Q_r} , \, \bar \Gamma' = \bar \Gamma_{Q'} , \, \bar \Gamma_r' = \bar \Gamma_{Q_r'},\]
and let the rotation subgroups be denoted with the bar removed.  Note that $\bar \Gamma_r < \bar \Gamma$ and $\bar \Gamma_r' < \bar \Gamma'$ are finite index subgroups.
The technical theorem we will need to prove the theorems from the introduction is the following.

\begin{theorem} \label{T:main 1 simpler} Suppose $Q,Q'$ are two compact right-angled hyperbolic polyhedra and $x \in int(Q) \cap int(Q')$ is some point.  If there exists $\delta > 1$ and $0 < K < 1$ such that
\begin{enumerate}
\item $\frac{\delta(1-K)}{K} \geq \log(\frac{D}L)$ where $D = D(Q)$ is from Lemma~\ref{L:bounded diameter faces} and $L$ is the Lebesgue number from Lemma~\ref{L:Lebesgue number}; and  
\item $N_1(Q') \subset B_\delta(x)$.
\end{enumerate}
then for $r = \frac{\delta}K$, there exists a homomorphism $\rho \colon \Gamma_r \to \Gamma'$ and a $K$--Lipschitz, $\rho$--equivariant map $\widetilde f \colon \mathbb H^d \to \mathbb H^d$.  Moreover, $\widetilde f$ descends to a degree $1$, $K$--Lipschitz map $f \colon M_{\Gamma_r} \to M_{\Gamma'}$.
\end{theorem}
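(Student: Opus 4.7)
The plan is to construct $\widetilde f$ on the fundamental domain $Q_r$ for $\bar\Gamma_r$ by composing the three building blocks of Section~\ref{S:contraction} so that each codimension-one face $F \in \mathcal F(Q_r)$ is mapped into a single face $\varphi(F) \in \mathcal F(Q')$. The assignment $F \mapsto \varphi(F)$ then induces a homomorphism $\rho \colon \bar \Gamma_r \to \bar \Gamma'$ by $\rho(\tau_F) = \tau_{\varphi(F)}$, and $\widetilde f$ is extended to all of $\mathbb H^d$ by $\rho$-equivariance.

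Concretely, on $Q_r$ set
\[ \widetilde f|_{Q_r} = \pi_{Q'} \circ \pi_{N_1(Q')} \circ \psi \circ \pi_{B_r}, \]
where $\pi_{B_r}$, $\pi_{N_1(Q')}$, $\pi_{Q'}$ are the closest-point projections onto $B_r(x)$, $N_1(Q')$, and $Q'$ respectively (each $1$-Lipschitz by Lemma~\ref{L:exponential contraction}, using that $N_1(Q')$ is convex), and $\psi = \psi_{\delta,r} \colon B_r(x) \to B_\delta(x)$ is the spherical contraction of Lemma~\ref{L:spherical contraction}, which is $K$-Lipschitz since $r = \delta/K$. Hypothesis (2) guarantees that the middle projection is well-defined on the image of $\psi$, so the composition is a $K$-Lipschitz map from $Q_r$ onto $Q'$.

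The heart of the argument is a diameter estimate for face images. For each $F \in \mathcal F(Q_r)$, since $F$ lies outside the open ball $B_r$, $\pi_{B_r}(F) \subseteq \partial B_r$ has diameter at most $D$ by Lemma~\ref{L:bounded diameter faces}. The sharper sphere-level bound in Lemma~\ref{L:spherical contraction} says $\psi|_{\partial B_r}$ is $e^{\delta-r}$-Lipschitz, so $\psi(\pi_{B_r}(F)) \subseteq \partial B_\delta$ has diameter at most $D\,e^{\delta-r} \leq L$; the latter inequality is exactly the content of hypothesis (1) (rewriting as $e^{r-\delta} \geq D/L$). Since this set lies outside $N_1(Q')$ (slightly enlarging $\delta$ if necessary to guarantee strict separation), $\pi_{N_1(Q')}$ sends it into $\partial N_1(Q')$ with diameter still at most $L$. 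Lemma~\ref{L:Lebesgue number} applied to $Q'$ then provides $\varphi(F) \in \mathcal F(Q')$ with $\pi_{N_1(Q')}(\psi(\pi_{B_r}(F))) \subseteq \mathcal H^1_{\varphi(F)} \subseteq \mathcal H_{\varphi(F)}$, and the final projection $\pi_{Q'}$ carries this into $\varphi(F)$, yielding $\widetilde f(F) \subseteq \varphi(F)$.

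Defining $\rho$ on the generators of the presentation~\eqref{E:presentation} by $\rho(\tau_F) = \tau_{\varphi(F)}$, the squared relations are automatic, and for $F \cap F' \neq \emptyset$ the nonempty image $\widetilde f(F \cap F') \subseteq \varphi(F) \cap \varphi(F')$ forces $\varphi(F) \cap \varphi(F') \neq \emptyset$ in $Q'$, so the two reflections commute in $\bar\Gamma'$. Extending $\widetilde f$ to $\mathbb H^d$ by $\widetilde f(\gamma z) = \rho(\gamma) \widetilde f(z)$ for $\gamma \in \bar\Gamma_r$, $z \in Q_r$, is well defined because $\widetilde f(F)$ is fixed pointwise by $\rho(\tau_F) = \tau_{\varphi(F)}$, and is globally $K$-Lipschitz since reflections are isometries and a geodesic broken across faces is mapped to a path of total length at most $K$ times its own. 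Because $\det \circ \rho$ agrees with $\det$ on generators, $\rho(\Gamma_r) \subseteq \Gamma'$, and $\widetilde f$ descends to $f \colon M_{\Gamma_r} \to M_{\Gamma'}$. For degree $1$, a regular value $y \in \mathrm{int}(Q')$ has unique $\widetilde f|_{Q_r}$-preimage $\psi^{-1}(y) \in \mathrm{int}(B_\delta) \subseteq \mathrm{int}(Q_r)$, since the three projections are identities near $y \in \mathrm{int}(Q') \subseteq \mathrm{int}(N_1(Q')) \subseteq \mathrm{int}(B_\delta)$ and $\psi$ is an orientation-preserving radial diffeomorphism. The principal obstacle is the diameter estimate above: the delicate balance of Lemmas~\ref{L:bounded diameter faces},~\ref{L:spherical contraction}, and~\ref{L:Lebesgue number}, mediated exactly by hypothesis (1), is the mechanism that lets the face images fit within a single Lebesgue cover element.
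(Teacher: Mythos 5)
Your proof is correct and follows essentially the same route as the paper: the composition $\pi_{Q'}\circ\pi_{N_1(Q')}\circ\psi\circ\pi_{B_r}$, the intrinsic diameter estimate feeding the Lebesgue number to define $\varphi$ and hence $\rho$, equivariant extension, and the determinant argument for descent all match the paper's construction. Two trivial slips: $\psi^{-1}(y)$ lies in $\mathrm{int}(B_r)$ (not $\mathrm{int}(B_\delta)$), and the ``slightly enlarging $\delta$'' caveat is unnecessary, since $N_1(Q')\subset B_\delta$ already forces $\partial B_\delta$ to miss $\mathrm{int}(N_1(Q'))$, so $\pi_{N_1(Q')}$ automatically carries $\psi(\pi(F))\subset\partial B_\delta$ into $\partial N_1(Q')$.
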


For the remainder of this subsection, we fix $Q,Q'$, $x \in int(Q) \cap int(Q')$, $\delta$ and $K$, and $r = \frac{\delta}K$ as in the theorem.  We also write $B_r(x)$ simply as $B_r$, etc.
Define a map $\widetilde f^\circ \colon Q_r \to Q'$ as the composition
\[ \widetilde f^\circ = \pi' \circ \psi \circ \pi \]
where $\pi = \pi_{B_r}$ and $\pi' =\pi_{Q'}$ are the closest point projections to $B_r$ and $Q'$, respectively, and $\psi = \psi_{\delta,r} \colon B_r \to B_\delta$ is the spherical coordinates map as described in Section~\ref{S:spherical coordinate maps}.  Both $\pi$ and $\pi'$ are $1$--Lipschitz, and by Lemma~\ref{L:spherical contraction}, $\psi$ is $K$--Lipschitz.

Recall that $\mathcal F(Q)$ is the set of $d-1$--dimensional faces of $Q$.  The following easily implies Theorem~\ref{T:main 1 simpler}.
\begin{lemma} \label{L:homeo defining homo}
There exists a function $\varphi \colon \mathcal F(Q_r) \to \mathcal F(Q')$ such that for all $F \in \mathcal F(Q_r)$, $\widetilde f^\circ (F) \subset \varphi(F)$.
\end{lemma}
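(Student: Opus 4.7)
The plan is to follow each face $F \in \mathcal F(Q_r)$ through the three-step composition $\widetilde f^\circ = \pi' \circ \psi \circ \pi$, tracking its diameter at each stage, and show that by the time we reach $\partial N_1(Q')$ the image is small enough in the path metric that Lemma~\ref{L:Lebesgue number} forces it into a single $\mathcal H^1_{F'}$. Then $\pi' = \pi_{Q'}$ sends it into the associated face $F' \in \mathcal F(Q')$, and we set $\varphi(F) = F'$.

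The size tracking goes as follows. Lemma~\ref{L:bounded diameter faces} (applied with $X = \{x\}$ and using $r = \delta/K \geq 1$) gives that $F$ has ambient diameter at most $D = D(Q)$. Because $F$ is totally geodesic and lies on a hyperplane of $\mathcal P(Q)$ bounding a half-space containing $B_r$, $F$ is disjoint from the interior of $B_r$, so for any two points of $F$ the hyperbolic geodesic joining them stays inside $F$ and avoids the interior of $B_r$. Applying the $1$-Lipschitz projection $\pi$ (Lemma~\ref{L:exponential contraction}) to this segment produces a curve on $\partial B_r$ of length at most $D$, whence $\pi(F)$ has path-metric diameter at most $D$ in $\partial B_r$. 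By Lemma~\ref{L:spherical contraction}, $\psi|_{\partial B_r}$ is $e^{\delta-r}$-Lipschitz between the path metrics on the two boundary spheres, and since $r = \delta/K$ this exponent equals $-\delta(1-K)/K$; condition (1) then forces $\psi(\pi(F)) \subset \partial B_\delta$ to have path-metric diameter at most $L$.

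The slightly subtle step is transferring this bound to $\partial N_1(Q')$ in the path metric, which is the setting of Lemma~\ref{L:Lebesgue number}. Condition (2) puts $\partial B_\delta$ outside $N_1(Q')$, so the closest point projection $\pi_{N_1(Q')}$ maps $\partial B_\delta$ into $\partial N_1(Q')$. For any two points of $\psi(\pi(F))$, any path joining them in $\partial B_\delta$ has its $\pi_{N_1(Q')}$-image lying on $\partial N_1(Q')$ with length at most the original, because $\pi_{N_1(Q')}$ is $1$-Lipschitz in the ambient metric and the ambient arclength of a curve on $\partial N_1(Q')$ coincides with its path-metric arclength. Hence $\pi_{N_1(Q')}(\psi(\pi(F)))$ has path-metric diameter at most $L$ on $\partial N_1(Q')$, and Lemma~\ref{L:Lebesgue number} places it inside some $\mathcal H^1_{F'}$. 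Finally, for points outside $N_1(Q')$ one has the factorization $\pi' = \pi_{Q'} \circ \pi_{N_1(Q')}$, and $\pi_{Q'}(\mathcal H^1_{F'}) \subseteq F'$, yielding $\widetilde f^\circ(F) \subset F'$.

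The main obstacle is keeping ambient and path-metric bounds straight: $\pi$, $\psi|_{\partial B_r}$, and $\pi_{N_1(Q')}$ come with different natural Lipschitz statements, while the Lebesgue number is formulated in the path metric on $\partial N_1(Q')$. The bookkeeping works because in each case the relevant path-metric distance on the target surface is bounded by the length of an image path, and that length is in turn bounded by the length of a suitably chosen source path via the $1$-Lipschitz (or better) property of the map in question.
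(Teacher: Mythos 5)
Your proposal follows the same route as the paper's proof: track a face $F$ through $\pi$, $\psi$, and the closest-point projection onto $N_1(Q')$ to obtain a set of path-metric diameter at most $L$ on $\partial N_1(Q')$, then invoke the Lebesgue number of Lemma~\ref{L:Lebesgue number} and the factorization $\pi' = \pi_{Q'}\circ\pi_{N_1(Q')}$ to land in a single $F' \in \mathcal F(Q')$. You are in fact more careful than the paper's write-up about the distinction between ambient and intrinsic (path-metric) distances: the $1$-Lipschitz statements for the closest-point projections are ambient, whereas both the sphere-to-sphere bound from Lemma~\ref{L:spherical contraction} and the Lebesgue-number application are path-metric statements, and your device of pushing a geodesic segment of $F$ (which avoids the interior of $B_r$ since $F$ is a convex face on a supporting hyperplane of $Q_r \supseteq B_r$) through the composition and bounding arclength at each stage is exactly what is needed to make the diameter bookkeeping rigorous.
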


Assuming Lemma~\ref{L:homeo defining homo}, we prove the theorem.

\begin{proof}[Proof of Theorem~\ref{T:main 1 simpler}.]
We first claim that the function $\rho(\tau_F) = \tau_{\varphi(F)}$, defined on the generators $\{\tau_F\}_{F \in \mathcal F(Q_r)}$ extends to a homomorphism $\rho \colon \bar \Gamma_r \to \bar \Gamma'$.  For this, we must simply verify that relations of $\bar \Gamma_r$ from the presentation \eqref{E:presentation} are satisfied.  Clearly $\rho(\tau_F)^2 = \tau_{\varphi(F)}^2 = 1$, so we need only prove that the second type of relation holds.  For that, we observe that for $F,F' \in \mathcal F(Q)$, we have
\[ [\tau_F,\tau_{F'}] = 1 \quad \Leftrightarrow  \quad F \cap F' \neq \emptyset  \quad \Rightarrow   \quad \emptyset \neq \widetilde f^\circ (F) \cap \widetilde f^\circ(F') \subset \varphi(F) \cap \varphi(F')\]
\[ \hspace{4.9cm} \Rightarrow  \quad [\rho(\tau_F),\rho(\tau_{F'}] = [\tau_{\varphi(F)},\tau_{\varphi(F')}] = 1\]
as required.

Before we define the equivariant map $\widetilde f \colon \mathbb H^d \to \mathbb H^d$, we make a few observations.  First, note that for $z \in Q_r$, 
\[ \stab_{\bar \Gamma_r}(z) = \langle \tau_F \mid F \in \mathcal F(Q_r), \, z \in F \rangle.\]
Then for all $F \in \mathcal F(Q_r)$ with $z \in F$, $\widetilde f^\circ(z) \in \widetilde f^\circ(F) \subset \varphi(F)$, so $\rho(\tau_F)$ fixes $\widetilde f^\circ(z)$, and hence $\rho(\stab_{\bar \Gamma_r}(z))$ fixes $\widetilde f^\circ(z)$.  Next, we observe that $(\bar \Gamma_r \cdot z) \cap Q_r = \{z\}$ and $\bar \Gamma_r \cdot Q_r = \mathbb H^d$.  So, for any $y \in \mathbb H^d$, let $z \in Q_r$ be the unique element with $y \in \bar \Gamma_r \cdot z$.  Let $\gamma \in \bar \Gamma_r$ be such that $\gamma \cdot z = y$, and we want to define
\[ \widetilde f(y) = \rho(\gamma) \widetilde f^\circ(z).\]
To see that this is well-defined, let $\gamma' \in \bar \Gamma_r$ be any other element such that $\gamma' \cdot z = y$.  Then $\gamma^{-1} \gamma' \in \stab_{\bar \Gamma_r}(z)$, and so $\rho(\gamma^{-1}\gamma')$ fixes $\widetilde f^\circ(z)$.  Therefore, 
\[ \rho(\gamma') \widetilde f^\circ(z) = \rho(\gamma \gamma^{-1}\gamma')  \widetilde f^\circ(z) = \rho(\gamma)\rho(\gamma^{-1}\gamma')  \widetilde f^\circ(z) = \rho(\gamma)\widetilde f^\circ(z),\]
and so $\widetilde f$ is well-defined, and equivariant by construction.

For all $\gamma \in \bar \Gamma_r$, we have
\[ \widetilde f|_{\gamma \cdot Q_r} = \rho(\gamma) \widetilde f^\circ \gamma^{-1}|_{\gamma \cdot Q_r},\]
so since $\widetilde f^\circ$ is $K$--Lipschitz, so is $\rho(\gamma) \widetilde f^\circ \gamma^{-1}$, and hence also $\widetilde f$.

Note that since $\det \colon \bar \Gamma_r,\bar \Gamma' \to \{\pm 1\}$ are both defined by $\det(\tau_F) = -1$, it follows that $\det \circ \rho = \det \colon \bar \Gamma_r \to \{\pm 1\}$.   Therefore, $\rho^{-1}(\Gamma') = \Gamma_r$, and $\widetilde f$ descends to the quotient $f \colon M_{\Gamma_r} \to M_{\Gamma'}$.

The orbifolds $M_{\Gamma_r}$ and $M_{\Gamma'}$ are obtained by doubling $Q_r$ and $Q'$, respectively, over their boundaries.
As mentioned above, we can see this by taking a fundamental domain for $\Gamma_r$ to be $Q_r \cup \tau_F(Q_r)$ for some  $F \in \mathcal F(Q_r)$ and fundamental domain $Q' \cup \tau_{\varphi(F)}Q'$ for $\Gamma'$.
Then $f$ sends each copy of $Q_r$ by a degree $1$ map to a copy of $Q'$ (the map is basically $\widetilde f^\circ$), and consequently, $\deg(f) = 1$.
\end{proof}

We now turn to the
\begin{proof}[Proof of Lemma~\ref{L:homeo defining homo}.]
We write $\pi'$ as a composition $\pi' \circ \pi_0'$ where $\pi_0' = \pi_{N_1(Q')}$ is the closest point projection to $N_1(Q')$.  It is not difficult to see that this is possible (c.f.~\cite{EM}), and in any case, we could just define $\widetilde f^\circ = \pi' \circ \pi_0 \circ \psi \circ \pi$ to begin with.  We analyze $\widetilde f^\circ$ as the composition of $\pi'$ and $g = \pi_0' \circ \psi \circ \pi$.  The key property of the map $g$ is the following.

\noindent
{\bf Claim.} For any $F \in \mathcal F(Q_r)$, the diameter of $g(F)$ in $\partial N_1(Q')$ is less than $L$.
\begin{proof}
Fix $F \in \mathcal F(Q_r)$.  By Lemma~\ref{L:bounded diameter faces}, the diameter of $F$ is at most $D$.  Then by Lemma~\ref{L:exponential contraction}, $\pi(F)$ has diameter at most $D$ in $\partial B_r(x)$.  Applying Lemma~\ref{L:spherical contraction} and the first assumption of the theorem, $\psi(\pi(F))$ has diameter at most
\begin{equation} \label{E:diameter of cell structure} e^{\delta-\delta/K}D = e^{\frac{-\delta(1-K)}{K}} D \leq e^{-\log(\frac{D}L)} D = L.\end{equation}
Finally, by Lemma~\ref{L:exponential contraction} again, $g(F)= \pi_0'(\psi(\pi(F)))$ has diameter at most $L$ in $\partial N_1(Q)$, as required.
\end{proof}

Recall that $L$ is a Lebesgue number for the cover
\[ \{(\pi')^{-1}(F') \cap \partial N_1(Q_r) \}_{F' \in \mathcal F(Q')}\]
(see \S\ref{S:Lebesgue numbers}).  Therefore, by the claim, $g(F)$ is contained in at least one set $(\pi')^{-1}(F')$, for some $F' \in \mathcal F(Q')$.  Equivalently, $\widetilde f^\circ(F) \subset F'$.

Now enumerate the faces $\mathcal F(Q') = \{F_1,\ldots,F_k\}$ in any way.  For $F \in \mathcal F(Q_r)$, define $\varphi(F)= F_i$, if $\widetilde f^\circ (F) \subset F_i$ and $\widetilde f^\circ (F) \not \subset F_j$ for all $j < i$ (in particular, if $\widetilde f^\circ(F) \subset F_1$, then $g(F) = F_1$).  That is, $\varphi(F)$ is the smallest indexed face of $\mathcal F(Q')$ containing $\widetilde f^\circ(F)$.  By construction, $\widetilde f^\circ(F) \subset \varphi(F)$, as required.
\end{proof}

\subsection{Proofs of Theorems~\ref{T:main RACG} and \ref{T:main RACG2}.}

Using Theorem~\ref{T:main 1 simpler} we prove the first two theorems from in the introduction.
\begin{proof}[Proof of Theorem~\ref{T:main RACG}.]
Fix $Q,Q'$ and any $x \in int(Q) \cap int(Q')$.  Let $\delta > 1$ be such that $N_1(Q') \subset B_\delta(x)$.  Then for any $0 < K < 1$, set $r = \frac{\delta}K$.  We now note that as $K \to 0$, $r \to \infty$, and $\frac{\delta(1-K)}K \to \infty$.  Thus, taking $\{K_n\}$ to be any sequence tending to infinity (and passing to the tail of the sequence), it follows that $x,\delta,K_n$ satisfy the assumptions of Theorem~\ref{T:main 1 simpler}.  Therefore, setting $r_n = \frac{\delta}{K_n}$, $\Gamma_n = \Gamma_{r_n} < \Gamma$, and $\Gamma'  = \Gamma_{Q'}$, Theorem~\ref{T:main 1 simpler} provides $K_n$--Lipschitz, degree $1$ maps
\[ f_n \colon M_{\Gamma_n} \to M_{\Gamma'} \]
and $K_n \to 0$ as $n \to \infty$ by assumption.
\end{proof}

\begin{proof}[Proof of Theorem~\ref{T:main RACG2}.]
Again, fix $Q,Q'$ and any $x \in int(Q) \cap int(Q')$.  Let
\[ R = \max \{R(Q),R(Q') \} \]
from Lemma~\ref{L:finite Q-hull} and $D = 2R+4$, as in Lemma~\ref{L:bounded diameter faces}.  For $s \geq 1$, set
\[ \delta = \delta(s) = 1+s+R  \]
and let $K = K(s)$ with $0 < K < 1$ be defined by
\[ \frac{\delta(1-K)}{K} = \log \left( \frac{D}L \right).\]
Note that as $s \to \infty$, we have $\delta(s) \to \infty$ and $K(s) \to 1$.

Let $Q_s' =  \mathcal H_{Q'}(B_s(x))$.  By Lemma~\ref{L:finite Q-hull}, $Q_s' \subset B_{s+R}(x)$, and so 
\[ B_s(x) \subset Q_s' \subset N_1(Q_s') \subset B_\delta(x).\]
The polyhedra $Q,Q_s'$ and the constants $\delta > 1$ and $0 < K < 1$, satisfy the assumptions of Theorem~\ref{T:main 1 simpler}.  Consequently, for $r = r(s) = \frac{\delta(s)}{K(s)}$, and $Q_r = \mathcal H_Q(B_r(x))$, we have a homomorphism $\rho_s \colon \bar \Gamma_r \to \bar \Gamma_s'$ and a $K$--Lipschitz, degree $1$ map
\[ f_s \colon M_{\Gamma_r} \to M_{\Gamma_s'}\]
with $f_{s*} = \rho_s$.

Now for each $s > 1$, we want to find an element $\gamma_s \in \Gamma_r$ such that
\[ \lim_{s \to \infty} \frac{\ell(\gamma_s)}{\ell(\rho_s(\gamma_s))} = 1.\]
To do this, we first observe that $B_s(x) \subset Q_s' \subset B_\delta(x)$.  Pick a geodesic through $x$, and let $F_{s,1}',F_{s,2}' \in \mathcal F(Q')$ be faces intersected by this geodesic (there might be more than one pair, but we choose one).  The length of the segment of the geodesic between the points of intersection with $F_{s,1}'$ and $F_{s,2}'$ is between $2s$ and $2\delta$.  For $s$ very large, elementary hyperbolic geometry shows that the unique geodesic orthogonal to the two hyperbolic hyperplanes containing $F_{s,1}'$ and $F_{s,2}'$ must come very close to $x$.  In particular, since $\frac{s}{\delta} \to 1$ as $s \to \infty$, the distance between these hyperplanes is asymptotic to $2s$ (i.e. the ratio of this distance to $2s$ tends to $1$).

Let $F_{r,1},F_{r,2} \in \mathcal F(Q_r)$ be a pair of faces of $Q_r$ so that $\rho_s(\tau_{F_{r,i}}) = \tau_{F_{s,i}'}$ for $i = 1,2$.
Observe that by Lemma~\ref{L:finite Q-hull}, we have $Q_r \subset B_{r+R}(x)$.
Consequently, the distance between the hyperplanes containing $F_{r,1}$ and $F_{r,2}$ is at most
\[ 2(r+R) = 2\left(\frac{\delta}{K} + R\right) = 2 \left(\frac{1 + s + R}{K} + R \right).\]
In particular, as $s \to \infty$, $K \to 1$ and so this upper bound is also asymptotic to $2s$.  

Now let $\gamma_s = \tau_{F_{r,1}}\circ \tau_{F_{r,2}}$ so that $\rho_s(\gamma_s) = \tau_{F_{s,1}'}\circ \tau_{F_{s,2}'}$.  The translation length of the product of two reflections (in disjoint hyperplanes) is twice the distance between the hyperplanes.  So, $\ell(\rho_s(\gamma_s))$ is asymptotic to $4s$ and $\ell(\gamma_s)$ is bounded above by a quantity asymptotic to $4s$.  Since
\[ \ell(\rho(\gamma_s)) \leq K\ell(\gamma_s) \leq \ell(\gamma_s),\]
it follows that 
\[ 1 \leq \liminf_{s \to \infty} \frac{\ell(\gamma_s)}{\ell(\rho_s(\gamma_s))} \leq  \limsup_{s \to \infty} \frac{\ell(\gamma_s)}{\ell(\rho_s(\gamma_s))} \leq \limsup_{s \to \infty} \frac{4s}{4s} = 1.\]
Consequently,
\[ \lim_{s \to \infty} \frac{\ell(\gamma_s)}{\ell(\rho_s(\gamma_s))} = 1.\]
The theorem follows now by choosing any sequence $s_n \to \infty$, setting $\Gamma_n' = \Gamma_{s_n}'$ and $\Gamma_n = \Gamma_{r(s_n)}$.
\end{proof}

As an application of Theorem~\ref{T:main RACG} and Theorem~\ref{T:Tholozan volume}, we have the following.

\begin{corollary} \label{C:arbitrarily large image}  For any right-angled rotation group $\Gamma < \SO_0(d,1)$, for $d \leq 4$, and integer $n \geq 1$, there exists a finite index subgroup $\Gamma_n < \Gamma$, and $n$ representations $\rho_1,\ldots,\rho_n \in \Dom(\Gamma_n,\SO_0(d,1))$ such that $\Vol(\rho_i) \neq \Vol(\rho_k)$ for all $i \neq k$.  Consequently, if $j \colon \Gamma_n \to \SO_0(d,1)$ is the inclusion, then $\Vol(j \times \rho_i) \neq \Vol(j \times \rho_k)$ for all $i \neq k$.
\end{corollary}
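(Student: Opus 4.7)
The plan is to fix a single cocompact right-angled rotation group $\Gamma' < \SO_0(d,1)$ as target, apply Theorem~\ref{T:main RACG} to $\Gamma, \Gamma'$ to obtain a sequence of finite index subgroups $\Gamma_m \leq \Gamma$ together with degree $1$, $K_m$--Lipschitz maps $f_m\colon M_{\Gamma_m}\to M_{\Gamma'}$ (with $K_m\to 0$), and then produce the $n$ representations by restricting $n$ of the induced homomorphisms $(f_{m_i})_*$ to a common finite index subgroup $\Gamma_n \leq \bigcap_i \Gamma_{m_i}$. Restriction preserves strict domination: if $\widetilde f_{m_i}\colon \mathbb H^d\to\mathbb H^d$ is the $(j,(f_{m_i})_*)$--equivariant $K_{m_i}$--Lipschitz lift, then it is also $(j|_{\Gamma_n}, \rho_i)$--equivariant, where $\rho_i := (f_{m_i})_*|_{\Gamma_n}$, so $\rho_i \in \Dom(\Gamma_n,\SO_0(d,1))$.

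The key selection step is arranging that the indices $[\Gamma:\Gamma_{m_i}]$ are pairwise distinct. I would do this using the standard fact that a $K$--Lipschitz map of degree $1$ between closed hyperbolic $d$--orbifolds has $\Vol(\mathrm{source}) \geq K^{-d}\Vol(\mathrm{target})$, which follows from integrating $f^*d\vol_{\mathbb H}$ and using the pointwise bound $|f^*d\vol_{\mathbb H}|\leq K^d\, d\vol_{\mathbb H}$. Applied to $f_m$, this forces $[\Gamma:\Gamma_m]=\Vol(M_{\Gamma_m})/\Vol(M_\Gamma)\to\infty$ as $m\to\infty$, so after passing to a subsequence the indices may be taken strictly increasing. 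Pick any $n$ such indices $m_1<\cdots<m_n$ and set $\Gamma_n := \bigcap_i \Gamma_{m_i}$, which is finite index in $\Gamma$.

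The volume computation then proceeds as in Corollary~\ref{C:volume comp 1}: the composition $M_{\Gamma_n}\to M_{\Gamma_{m_i}}\xrightarrow{f_{m_i}} M_{\Gamma'}$ has degree $[\Gamma_{m_i}:\Gamma_n]$, so
\[\Vol(\rho_i) \;=\; [\Gamma_{m_i}:\Gamma_n]\,\Vol(M_{\Gamma'}) \;=\; \frac{[\Gamma:\Gamma_n]}{[\Gamma:\Gamma_{m_i}]}\Vol(M_{\Gamma'}).\]
Since $[\Gamma:\Gamma_n]$ and $\Vol(M_{\Gamma'})$ are independent of $i$ and the $[\Gamma:\Gamma_{m_i}]$ are pairwise distinct, so are the $\Vol(\rho_i)$. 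The consequence $\Vol(j\times\rho_i)\neq\Vol(j\times\rho_k)$ follows from Theorem~\ref{T:Tholozan volume}, which gives $\Vol(j\times\rho_i)={\bf V}_d(\Vol(M_{\Gamma_n})+(-1)^d\Vol(\rho_i))$ with the first summand independent of $i$. I do not expect any serious obstacle; the only slightly subtle point is confirming that restricting $(f_{m_i})_*$ from $\Gamma_{m_i}$ to the finite index subgroup $\Gamma_n$ multiplies the representation volume by the cover degree $[\Gamma_{m_i}:\Gamma_n]$, which is immediate from naturality of the pullback integral under finite covers of the source.
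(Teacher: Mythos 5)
Your proof is correct, but it takes a genuinely different route from the paper's. The paper argues recursively: starting from $\Gamma_0 = \Gamma$, it applies Theorem~\ref{T:main RACG} with source and target both equal to $\Gamma_{i-1}$ to produce a nested chain $\Gamma = \Gamma_0 > \Gamma_1 > \cdots > \Gamma_n$ and homomorphisms $\rho_1^i \colon \Gamma_i \to \Gamma_{i-1}$; the $n$ representations on $\Gamma_n$ are the partial compositions $\rho_i^n = \rho_1^{n-i+1}\circ\cdots\circ\rho_1^n$, whose volumes are $\Vol(M_{\Gamma_{n-i}})$ and hence automatically distinct because the chain is strictly nested. You instead apply the theorem once (with a fixed target $\Gamma'$), extract a sequence $\Gamma_m$ with $K_m\to 0$, invoke the pointwise Jacobian bound $|f^*d\vol_{\mathbb H}|\le K^d\,d\vol_{\mathbb H}$ to force $[\Gamma:\Gamma_m]\to\infty$, choose $n$ with distinct indices, and restrict all of the corresponding pushforward representations to a common finite-index subgroup --- using that a $(j,\rho)$--equivariant Lipschitz lift remains equivariant after restriction and that $\Vol$ scales by the covering degree when the source is replaced by a finite cover. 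Both are valid; the paper's recursive version is slightly more self-contained (it needs no Lipschitz-volume comparison and never forms intersections of subgroups), while your version shows that a single family produced by Theorem~\ref{T:main RACG} already suffices, at the cost of importing the standard $\Vol(\mathrm{source})\ge K^{-d}\Vol(\mathrm{target})$ estimate to separate the indices.
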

\begin{proof}  The subgroups $\Gamma_n$ are defined recursively by iteratively applying Theorem~\ref{T:main RACG}.  Specifically, starting with the pair of right-angled rotation groups $\Gamma,\Gamma$ (i.e.~the {\em same} rotation group), let $\Gamma_0 = \Gamma$, and the theorem provides a finite index subgroup $\Gamma_1 < \Gamma_0$ (infinitely many, in fact, but we just pick one) and a homomorphism $\rho_1^1 \colon \Gamma_1 \to \Gamma_0 < \SO_0(d,1)$ which is strictly dominated by the inclusion $\Gamma_1 \to \SO_0(d,1)$.  Next, apply Theorem~\ref{T:main RACG} again, this time to the pair $\Gamma_1,\Gamma_1$, to produce a finite index subgroup $\Gamma_2 < \Gamma_1$, and $\rho_1^2 \colon \Gamma_2 \to \Gamma_1 < \SO_0(d,1)$ strictly dominated by the inclusion $\Gamma_2 \to \SO_0(d,1)$.  Now we can produce a second representation $\rho_2^2$ of $\Gamma_2$ as the composition $\rho_2^2 = \rho_1^1 \circ \rho_1^2 \colon \Gamma_2 \to \Gamma_0 < \SO_0(d,1)$ which is also strictly dominated by the inclusion $\Gamma_2 \to \SO_0(d,1)$.  Furthermore, observe that
\[ \Vol(\rho^2_2) = \vol(M_{\Gamma_0}) \neq \vol(M_{\Gamma_1}) = \vol(\rho_1^2).\]
We can iterate this construction, producing a descending chain of finite index subgroups $\Gamma = \Gamma_0 > \Gamma_1 > \Gamma_2 > \cdots > \Gamma_n > \cdots$ and representations
\[ \rho_1^n \colon \Gamma_n \to \Gamma_{n-1} < \SO_0(d,1)\]
and for $i = 2,\ldots,n$,
\[ \rho_i^n = \rho_1^{n-i+1} \circ \rho_1^{n-i+2} \circ \circ \cdots \circ \rho_1^{n-1} \circ \rho_1^n \colon \Gamma_n \to \Gamma_{n-i} < \SO_0(d,1).\]
These $\rho_i^n$ are strictly dominated by the inclusion $\Gamma_n \to \SO_0(d,1)$, and for all $i \neq k$, 
\[ \Vol(\rho_i^n) = \Vol(M_{\Gamma_{n-i}}) \neq \Vol(M_{\Gamma_{n-k}}) = \Vol(\rho_k^n),\]
as required.
The last statement follows from Theorem~\ref{T:Tholozan volume}.
\end{proof}

\section{Tetrahedral examples} \label{S:tetrahedron}

Agol's construction makes use of the group generated by reflections in a pair of hyperbolic tetrahedra.  To describe his construction, as well as the noncocompact examples, we let $T  = T(a_1,a_2,a_3;b_1,b_2,b_3)$ be the (possibly ideal) hyperbolic tetrahedron with (interior) dihedral angles $\frac{\pi}{a_i},\frac{\pi}{b_i}$ at the six edges.  These are listed with the following convention: fixing a vertex $v$ of the tetrahedron, $\frac{\pi}{a_1},\frac{\pi}{a_2},\frac{\pi}{a_3}$ are the dihedral angles at the three edges {\em not} adjacent to $v$, while $\frac{\pi}{b_1},\frac{\pi}{b_2},\frac{\pi}{b_3}$ are the dihedral angles adjacent to $v$, with the $b_i$ labels opposite $a_i$, for $i=1,2,3$.

The group generated by reflections in the faces of $T$ is a discrete group when the $a_i$ and $b_i$ are all positive integers.  As shown in \cite{lanner,vinberg,Thurston-notes}, this occurs for exactly nine compact tetrahedra and 23 noncompact tetrahedra; see~\cite{brunneretal}, for example, for a complete, concise list.  Of particular interest to us are the five tetrahedra
\[ T_j = T(2,3,j;2,3,5) \mbox{ for } j = 2,3,4,5,6. \]
The tetrahedron $T_6$ has two ideal vertices, while the other four are compact.  (Note: the index $j$ is our own notation, and does not conform to the notation in \cite{brunneretal}.)

We note that all five tetrahedra may be realized in $\mathbb H^3$ with a common vertex $v$, in which the adjacent dihedral angles are $\frac{\pi}2$, $\frac{\pi}{3}$, and $\frac{\pi}{5}$.  In fact, the tetrahedra so obtained are nested
\[ T_2 \subset T_3 \subset T_4 \subset T_5 \subset T_6. \]
For each $j = 2,3,4,5,6$, the three $2$--dimensional faces $F_1^j,F_2^j,F_3^j$ of $T_j$ adjacent to $v$ are contained in three common hyperbolic planes $P_1,P_2,P_3$, while the fourth faces $F_4^j$ are contained in a different planes $P_4^j$, $j = 2,3,4,5,6$.  For concreteness, we suppose the labelings are such that the angles between $P_i$ and $P_4^j$, for $i=1,2,3$ are $\frac{\pi}{2},\frac{\pi}{3},\frac{\pi}{j}$, respectively.  See Figure~\ref{F:nested-tet} where $T_2,T_3,T_4,T_6$ are illustrated in the upper half space model.

\begin{figure}[htb]
\begin{center}
\begin{tikzpicture}[xscale=1,yscale=1]
\node at (0,0) {\includegraphics[scale=.35]{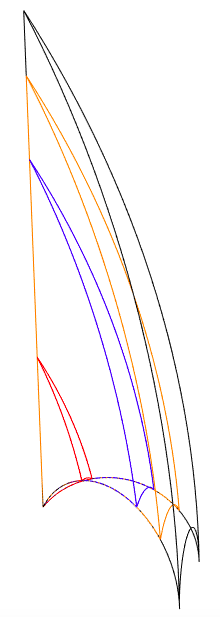}};
\end{tikzpicture}
\caption{\label{F:nested-tet} The nested hyperbolic tetrahedra $T_2 \subset T_3 \subset T_4 \subset T_6$.}
\end{center}
\end{figure}

For each $j = 2, 3,4, 5,6$, let $\bar \Gamma_j$ denote the group generated by reflections in the faces $F_i^j$ of $T_j$, and $\Gamma_j < \bar \Gamma_j$, the index two rotation subgroup.  The group $\bar \Gamma_j$ has a presentation
\[ \bar \Gamma_j = \langle \tau_1,\tau_2,\tau_3,\tau_4 \mid (\tau_i\tau_k)^{e_{ik}} \rangle.\]
Here $\tau_i$ is the reflection in the plane $P_i$, for $i = 1,2,3$ and the reflection in the plane $P_4^j$ for $i = 4$.  The $e_{ik}$ in the relations are as follows:  for each $i$, $e^{ii} =1$ (since $\tau_i$ has order $2$), and for $i \neq k$, $\frac{\pi}{e_{ik}}$ is the dihedral angle on the unique edge adjacent to the faces in which $\tau_i$ and $\tau_k$ are reflections (indeed, $\tau_i\tau_k$ is rotation of order $e_{ik}$ about the geodesic line containing the edge).

\begin{proposition} \label{P:tetrahedron homs} If $j|j'$, then $\rho(\tau_i) = \tau_i$, for each $i$, defines a surjective homomorphism $\rho_{jj'} \colon \bar \Gamma_{j'} \to \bar \Gamma_j$.
\end{proposition}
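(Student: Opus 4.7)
The plan is to use the Coxeter-type presentation of $\bar\Gamma_j$ given in the statement, and verify that the assignment $\tau_i \mapsto \tau_i$ carries every relator of $\bar\Gamma_{j'}$ to the identity in $\bar\Gamma_j$; surjectivity is then immediate since the image contains a generating set.

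First I would compare the two presentations. The four generators are labeled identically, and by the convention in the excerpt, the planes $P_1,P_2,P_3$ (containing the three faces adjacent to the common vertex $v$) are the same for every $T_j$. Consequently the exponents $e_{ik}$ for $i,k \in \{1,2,3\}$ are identical in $\bar\Gamma_j$ and $\bar\Gamma_{j'}$, and the relations $\tau_i^2 = 1$ and $(\tau_i\tau_k)^{e_{ik}}=1$ on these three generators agree. For the relations involving $\tau_4$, the dihedral angles between $P_i$ and $P_4^j$ were specified as $\pi/2$, $\pi/3$, $\pi/j$ for $i=1,2,3$, so the only relations that differ between $\bar\Gamma_j$ and $\bar\Gamma_{j'}$ are $(\tau_3\tau_4)^j=1$ versus $(\tau_3\tau_4)^{j'}=1$, while $(\tau_1\tau_4)^2=1$ and $(\tau_2\tau_4)^3=1$ appear in both.

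The key step is then the single observation that $j \mid j'$ implies $(\tau_3\tau_4)^{j'}=\bigl((\tau_3\tau_4)^j\bigr)^{j'/j}=1$ already holds in $\bar\Gamma_j$. Thus every defining relation of $\bar\Gamma_{j'}$ is satisfied by the images $\rho_{jj'}(\tau_i) = \tau_i$ in $\bar\Gamma_j$, and by the universal property of the presentation, $\rho_{jj'}$ extends to a well-defined group homomorphism. Since $\bar\Gamma_j$ is generated by $\tau_1,\tau_2,\tau_3,\tau_4$, all of which lie in the image, $\rho_{jj'}$ is surjective.

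The only step requiring any outside input is the presentation itself: the relations listed are a \emph{complete} set of defining relations for the reflection group of a hyperbolic Coxeter polyhedron. This is the classical Poincar\'e / Vinberg presentation for discrete hyperbolic reflection groups, which one can invoke directly; once that is granted, the argument reduces to the elementary divisibility observation above, so no serious obstacle arises.
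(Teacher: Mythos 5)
Your proof is correct and takes essentially the same approach as the paper: verify the defining relations under $\tau_i \mapsto \tau_i$, observe that the only one that differs is $(\tau_3\tau_4)^{j'}=1$, and use $j \mid j'$ to reduce it to $(\tau_3\tau_4)^j=1$. The paper's proof is nearly identical, though it leaves the surjectivity remark and the matching of the other relations implicit.
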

\begin{proof}  We need only verify that the $\rho$--images of the relations in $\bar \Gamma_{j'}$ are satisfied in $\bar \Gamma_j$.  The only nontrivial relation we must check is that $\rho(\tau_3)\rho(\tau_4)^{j'} = 1$.  For clarity, write $\rho(\tau_i) = \hat \tau_i$, so that $(\hat \tau_3 \hat \tau_4)^j = 1$.  Then since $j | j'$ we can write $j' = kj$ and hence
\[ (\rho(\tau_3)\rho(\tau_4))^{j'} = (\hat \tau_3 \hat \tau_4)^{j'} = ((\hat \tau_3 \hat \tau_4)^j)^k = 1^k = 1.\]
\end{proof}

\subsection{Compact examples}

Agol's original example can now be stated as follows.
\begin{theorem}[Agol] \label{T:Agol}  There exists a $K$--Lipschitz, degree $1$ map $f \colon M_{\Gamma_4} \to M_{\Gamma_2}$ for some $K < 1$.
\end{theorem}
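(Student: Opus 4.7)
The strategy is to produce a $K$--Lipschitz map $h \colon T_4 \to T_2$ with $K < 1$ satisfying $h(F_i^4) \subset F_i^2$ for each $i=1,2,3,4$, then extend $h$ to a $\rho_{24}$--equivariant map $\widetilde h \colon \mathbb H^3 \to \mathbb H^3$ using the homomorphism $\rho_{24}$ from Proposition~\ref{P:tetrahedron homs}. Following the proof of Theorem~\ref{T:main 1 simpler}, the prescription $\widetilde h(\gamma x) := \rho_{24}(\gamma) h(x)$ for $\gamma \in \bar\Gamma_4$ and $x \in T_4$ is well-defined because the face condition guarantees that if $\tau_i \in \bar\Gamma_4$ fixes $x \in F_i^4$, then $\rho_{24}(\tau_i)$ fixes $h(x) \in F_i^2$. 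Equivariance is automatic, and $\widetilde h$ is globally $K$--Lipschitz by a standard piecewise-Lipschitz argument on the $\bar\Gamma_4$--tiling of $\mathbb H^3$. Since $\rho_{24}$ sends Coxeter generators to Coxeter generators, it intertwines the sign homomorphisms $\det$, so $\rho_{24}(\Gamma_4) \leq \Gamma_2$ and $\widetilde h$ descends to a $K$--Lipschitz map $f \colon M_{\Gamma_4} \to M_{\Gamma_2}$. The fundamental domain $T_4 \cup \tau_i T_4$ for $\Gamma_4$ maps bijectively, away from a null set, onto the fundamental domain $T_2 \cup \rho_{24}(\tau_i) T_2$ for $\Gamma_2$, giving $\deg f = 1$.

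To build $h$, exploit that $T_2 \subset T_4$ share the vertex $v$ and the three planes $P_1, P_2, P_3$ through $v$, while the angles between $P_3$ and $P_4^j$ are $\pi/2$ for $j=2$ and $\pi/4$ for $j=4$; consequently $P_4^2$ lies strictly closer to $v$ than $P_4^4$ along every direction in the cone bounded by $P_1, P_2, P_3$. In spherical coordinates $(t,\theta)$ centered at $v$, let $r_j(\theta)$ denote the hyperbolic distance from $v$ to $P_4^j$ along the geodesic of direction $\theta$, and set $\lambda(\theta) := r_2(\theta)/r_4(\theta) \in (0,1)$. Define $h(t,\theta) := (\lambda(\theta) t, \theta)$. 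By construction, $h \colon T_4 \to T_2$ is a diffeomorphism that preserves each plane $P_i$ for $i=1,2,3$ setwise and carries $F_4^4$ onto $F_4^2$, so all four face conditions are met.

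The main obstacle is verifying that $h$ is $K$--Lipschitz with $K<1$. The radial component of the derivative is $\lambda(\theta) \leq c < 1$, and on each normal sphere $S_t^{2}$ the tangential scaling is $\sinh(\lambda(\theta) t)/\sinh(t) \leq \lambda(\theta)$ by the convexity argument of Lemma~\ref{L:spherical contraction}. The only remaining contribution is a tangential cross-term of size at most $|\nabla_\theta \lambda|$, coming from the variation of $\lambda$ with $\theta$ (this is the delicate point, most visible near $v$, where in exponential coordinates $h$ looks like $x \mapsto \lambda(x/|x|)x$ with effective Lipschitz bound $\sup_\theta(\lambda(\theta) + |\nabla_\theta \lambda(\theta)|)$). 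A direct computation from the explicit coordinates of the vertices of $T_2 \subset T_4$ verifies the strict bound $\sup_\theta(\lambda(\theta) + |\nabla_\theta \lambda(\theta)|) < 1$; should this estimate be uncomfortably tight with the naive radial map, one postcomposes $h$ with a small spherical contraction as in Section~\ref{S:spherical coordinate maps}, supported well inside $T_2$ so as not to disturb the boundary behavior, to obtain a strict contraction.
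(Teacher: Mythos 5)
Your overall strategy—build a face-respecting $K$--Lipschitz map $T_4 \to T_2$ and extend it equivariantly using $\rho_{24}$—is the same as the paper's, but your construction of the map is genuinely different, and it has a gap that the paper's construction cleanly avoids. The paper works in the Klein model $\mathbb{K}^3$ with $v$ at the origin and takes $h$ to be the unique \emph{linear} map fixing the three edge-lines through $v$ and sending $T_4$ to $T_2$; the face conditions then follow automatically (since $P_1,P_2,P_3$ are Euclidean planes through $0$ spanned by those lines), and the Lipschitz bound is a one-shot application of Birkhoff's contraction lemma (Corollary~\ref{C:explicit birkhoff}): a linear map has constant singular values, so one just computes the largest one and checks it is $<1$ (the paper finds $\approx .976$). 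There is no angular cross-term to control.

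Your radial scaling $h(t,\theta)=(\lambda(\theta)t,\theta)$ introduces the term $|\nabla_\theta\lambda|$, which you correctly identify as the crux, but you do not actually control it. You assert that ``a direct computation ... verifies the strict bound $\sup_\theta(\lambda+|\nabla_\theta\lambda|)<1$,'' but this is precisely the nontrivial step, and it is not obviously true: $\lambda$ varies from roughly $.75$ (at the vertex opposite $F_2^j\cap F_3^j$) to something smaller, over a solid-angle region that is not large, and there is no a priori reason the gradient is small. More seriously, your proposed fallback does not repair the argument. The cross-term $\sqrt{t^2|\nabla_\theta\lambda|^2+\sinh^2(\lambda t)}/\sinh(t)$ is worst in the limit $t\to 0$, i.e.\ at the vertex $v$—a boundary point of $T_2$. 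A spherical contraction ``supported well inside $T_2$'' is the identity near $v$ and so cannot help there, while a contraction that does act near $v$ (e.g.\ a homothety toward an interior point) moves $F_4^2$ off the plane $P_4^2$ and destroys the face condition $h(F_4^4)\subset F_4^2$ that the equivariant extension requires. So as written the Lipschitz estimate is unproven and the proposed remedy fails; you would need to actually carry out the gradient estimate, or switch to the paper's linear map (or some other construction where the contraction constant is uniform) to close the gap.
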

\begin{proof}  We view $T_2 \subset T_4$ as above, inside the Klein model $\mathbb K^3$ of $\mathbb H^3$ with the vertex $v$ at the origin.  Let $\ell_1,\ell_2,\ell_3$ denote the lines in $\mathbb R^3$ through the origin containing the three edges of both $T_2$ and $T_4$ adjacent to $v$; see Figure~\ref{F:nested-tet-klein}.

\begin{figure}[htb]
\begin{center}
\begin{tikzpicture}[xscale=1,yscale=1]
\node at (0,0) {\includegraphics[scale=.35]{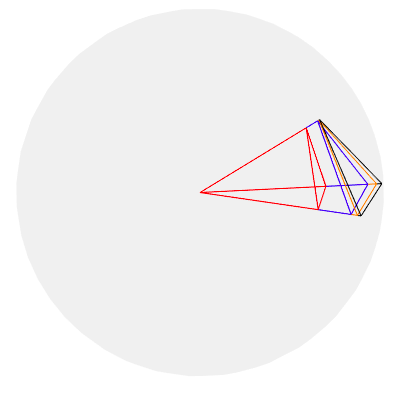}};
\end{tikzpicture}
\caption{\label{F:nested-tet-klein} The nested hyperbolic tetrahedra $T_2 \subset T_3 \subset T_4 \subset T_6$ in $\mathbb K^3$.}
\end{center}
\end{figure}

There is a unique linear map $h \colon \mathbb R^3 \to \mathbb R^3$ with $h(\ell_i) = \ell_i$, for  $i = 1,2,3$, and $h(T_4) =  T_2$.  If we can show that $h(\mathbb K^3)$ is contained in a ball of Euclidean radius $K$ centered at $0$, for some $K < 1$, then by Corollary~\ref{C:explicit birkhoff}, $h$ will be a $K$--Lipschitz map from $\mathbb K^3$ to itself. In particular, the restriction is a $K$--Lipschitz map $T_4 \to T_2$.  As in the case of right-angled reflection groups, this map extends to $\rho$--equivariant map which descends to a degree $1$ map $M_{\Gamma_4} \to M_{\Gamma_2}$, as required (see the proof of Theorem~\ref{T:main 1 simpler}).

To construct the required linear map $h$, we explicitly find $T_2$ and $T_4$ as follows.  Let $\mathbb R^{3,1}$ denote Minkowski space---that is, $\mathbb R^4$ with the quadratic form $Q(x,y,z,w) = x^2 + y^2 + z^2-w^2$ and associated bilinear form $B$.  Then $\mathbb K^3$ sits in $\mathbb R^{3,1}$ via the embedding $\mathbb K^3 \subset \mathbb R^3 = \{w = 1\} \subset \mathbb R^{3,1}$ and the hyperbolic planes in $\mathbb K^3$ are intersections with $\mathbb K^3$ of $Q$--orthogonal complements of vectors $u \in \mathbb R^{3,1}$ with $Q(u) = 1$.  For $Q(u) = 1$, write $u^\perp$ for the hyperbolic plane so defined.  For any two vectors $u,v$ with $Q(u) = Q(v) = 1$, and $|B(u,v)| < 1$, the cosine of the angle of intersection between $u^\perp$ and $v^\perp$ is exactly $B(u,v)$.  More precisely, $u,v$ are naturally identified with normal vectors to $u^\perp$ and $v^\perp$, respectively, and $B(u,v)$ is the cosine of the angle of intersection of the normal vectors;  see, for example~\cite{Thurston-notes}.

We can explicitly compute the vectors defining the planes $P_1,P_2,P_3,P_4^j$, for $j = 2,4$, and we get
\[ P_1 = (0,0,1,0)^\perp, P_2 = (0,1,0,0)^\perp, P_3 = \left( \frac{\sqrt{6 - 2 \sqrt{5}}}4,\frac{-1}2,\frac{-1 - \sqrt{5}}4,0 \right)^\perp, \]
\[ P_4^2 = \left(\frac{-1-\sqrt{5}}{2\sqrt{6 - 2 \sqrt{5}}},0,\frac{-1}2,\frac{-\sqrt{1 +3 \sqrt{5}}}{2\sqrt{2}} \right)^\perp, \]
and
\[ P_4^4 = \left(\frac{-1-2\sqrt{2} - \sqrt{5}}{2\sqrt{6 - 2 \sqrt{5}}},\frac{-1}{\sqrt{2}},\frac{-1}2,-\sqrt{\frac{2 + \sqrt{2} + \sqrt{5} + \sqrt{10}}{6 - 2 \sqrt{5}}} \right)^\perp. \]
We use linear algebra to find the intersections of the $Q$--orthogonal complements, and hence the vertices of the tetrahedra.  These are approximately
\[ \{(0,0,0),(.749871, 0, 0),(.749871, .463446, 0),(.654396, 0,.249957) \}, \]
for $T_2$ and
\[\{(0,0,0),(.979008, 0, 0), (.830974, .51357, 0),(.908295, 0,.346938) \}, \]
for $T_4$.  (Exact solutions are possible as the above computations suggest, but we describe the approximates as the expressions become increasingly messy).
From this we easily write down the linear transformation $h \colon \mathbb R^3 \to \mathbb R^3$ sending the vertices of $T_4$ to the vertices of $T_2$, and compute that its largest singular value is appproximately $.976184 < 1$.  Consequently, $h$ is $.977$--Lipschitz.
\end{proof}

\subsection{Noncompact examples and hyperbolic Dehn filling.}

Here we consider the homomorphism $\rho \colon \bar \Gamma_6 \to \bar \Gamma_2$ guaranteed by Proposition~\ref{P:tetrahedron homs}.
\begin{theorem}  \label{T:noncompact examples}  There exists a $\rho$--equivariant $K$--Lipschitz map $\widetilde f \colon \mathbb H^3 \to \mathbb H^3$ for some $K < 1$ that descends to a $K$--Lipschitz  
\[ f \colon M_{\Gamma_6} \to M_{\Gamma_2}. \]
Furthermore, $f$ sends some neighborhood of each cusp of $M_{\Gamma_6}$ to a single point, and
\[ \int_{M_{\Gamma_6}} f^* d\vol_{\mathbb H} = \Vol(M_{\Gamma_2}). \]
\end{theorem}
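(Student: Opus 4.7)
The plan is to construct $\widetilde{f}$ by adapting Agol's Klein-model linear map argument from Theorem~\ref{T:Agol} and then modifying it near the two ideal vertices of $T_6$ so that horoball neighborhoods are collapsed to points. Place $T_2\subset T_6$ in $\mathbb{K}^3$ with the common finite vertex $v$ at the origin, so that the three edges at $v$ lie along common rays $\ell_1,\ell_2,\ell_3$ meeting the other vertices $v_1,v_2,v_3$. For $T_6$ the vertices $v_1,v_2$ are ideal (on $\partial\mathbb{K}^3$), while in $T_2$ all four vertices lie in the interior, so the unique linear map $h : \mathbb{R}^3\to\mathbb{R}^3$ fixing the origin, preserving each $\ell_i$, and sending each vertex of $T_6$ to the corresponding vertex of $T_2$ strictly contracts along the rays $\ell_1,\ell_2$. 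Exactly as in the proof of Theorem~\ref{T:Agol}, I would compute the vertex coordinates explicitly from the Minkowski duals of the defining planes $P_1,P_2,P_3$ and $P_4^j$ for $j=2,6$, then verify that the largest singular value of $h$ is some $K<1$; Corollary~\ref{C:explicit birkhoff} then gives that $h$ is $K$-Lipschitz on $\mathbb{H}^3$, and by construction $h$ carries each face $F_i^6$ of $T_6$ into the appropriate target plane.

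Next I would modify $h$ near the ideal vertices. Pick nested horoballs $B_i\subset\tilde B_i$ at $v_i$ for $i=1,2$, deep enough in the cusp so that $\tilde B_i$ is disjoint from $T_2$, from $\tilde B_{3-i}$, and from the face of $T_6$ opposite $v_i$. Define $\widetilde{f}^\circ:T_6\to T_2$ to agree with $h$ outside $\tilde B_1\cup\tilde B_2$, to be the constant $h(v_i)$ on $B_i\cap T_6$, and on the shell $(\tilde B_i\setminus B_i)\cap T_6$ to interpolate along the hyperbolic geodesic joining $h(x')$ to $h(v_i)$, where $x'$ is obtained from $x$ by moving out to $\partial\tilde B_i$ along the normal to the horospheres at $v_i$. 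Because every face of $T_6$ meeting $\tilde B_i$ has $v_i$ as a vertex, the corresponding target plane contains both $h(x')$ and $h(v_i)$, so the interpolating geodesic stays in that plane and the face-boundary condition required by the argument of Theorem~\ref{T:main 1 simpler} is preserved; by taking $\tilde B_i\setminus B_i$ hyperbolically thick the interpolation can be made $K$-Lipschitz. The homomorphism $\rho=\rho_{26}$ of Proposition~\ref{P:tetrahedron homs} together with the equivariant extension procedure from Theorem~\ref{T:main 1 simpler} then produces a $\rho$-equivariant, $K$-Lipschitz map $\widetilde f$ descending to $f:M_{\Gamma_6}\to M_{\Gamma_2}$. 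The cusp collapse is automatic: since $h(v_i)$ is fixed by $\rho(\stab_{\bar\Gamma_6}(v_i))$, the equivariant extension of the constant $h(v_i)$ stays constant on the full horoball $\bar\Gamma_6\cdot(B_i\cap T_6)$, which projects to a cusp neighborhood in $M_{\Gamma_6}$. The volume identity follows because $\widetilde f^\circ:T_6\to T_2$ has degree one (away from the cusps it is the diffeomorphism $h$, and the interpolation extends this continuously onto $T_2$), so $\int_{T_6}(\widetilde f^\circ)^{*}d\vol_{\mathbb{H}}=\Vol(T_2)$, and doubling gives $\Vol(M_{\Gamma_2})$.

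The main obstacle I expect is twofold. First, as in the proof of Theorem~\ref{T:Agol}, the singular-value computation for the specific pair $T_6,T_2$ requires writing down explicit coordinates and checking $\|h\|_{\mathrm{op}}<1$; this is a routine but messy algebraic calculation whose outcome is not logically forced. Second, and more delicate, is arranging the transition shell so that the geodesic interpolation has Lipschitz constant at most $K$: the naive bound on the radial derivative is of order (hyperbolic diameter of the image geodesic)/(hyperbolic thickness of the shell), so one must take the shell thick enough in the cusp direction, and one must also control the tangential derivative to the horospheres, where $h$ is genuinely nonconstant. Once both are in hand, the rest reduces to the face-plane bookkeeping and equivariant extension already carried out in Theorem~\ref{T:main 1 simpler}.
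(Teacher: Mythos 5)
Your construction is genuinely different from the paper's, and the two places you flag as delicate are exactly where the paper takes a much simpler route. The paper does not build a new linear map for the pair $(T_6,T_2)$ at all; it reuses the $K$--Lipschitz linear map $h\colon T_4\to T_2$ already produced in Theorem~\ref{T:Agol} and precomposes with the nearest-point projection $\pi\colon\mathbb H^3\to T_4$, setting $\widetilde f^\circ = h\circ\pi$. Since $\pi$ is $1$--Lipschitz (Lemma~\ref{L:exponential contraction}), no further singular-value or Lipschitz estimate is needed; since the dihedral angles of $T_4$ are all at most $\pi/2$, $\pi$ sends the closure of each $F_i^6$ into the closure of $F_i^4$ and sends a horoball neighborhood of each ideal vertex to the corresponding vertex of $T_4$, so face-preservation and cusp-collapse are automatic; and the volume identity is immediate because $\widetilde f^\circ$ has image of measure zero on $T_6\setminus T_4$, while on $T_4$ it is the diffeomorphism $h$ onto $T_2$.

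Your cusp modification, by contrast, contains a real gap, and the remedy you propose goes the wrong way. You push $x$ at horospherical height $t$ out to $x'$ on $\partial\tilde B_i$ at height $t_0<t$, then interpolate along the geodesic from $h(x')$ to $h(v_i)$, asserting that a ``hyperbolically thick'' shell controls the Lipschitz constant. But two nearby points at height $t$ and distance $\epsilon$ push out to points at distance $\epsilon e^{t-t_0}$, and geodesic convexity of the distance function then gives a tangential stretching factor of roughly $\sigma(t)\,K\,e^{t-t_0}$, where $\sigma$ is the interpolation parameter with $\sigma=1$ on $\partial\tilde B_i$ and $\sigma=0$ on $\partial B_i$. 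For the linear choice $\sigma(t)=(t_1-t)/L$ with $L=t_1-t_0$, this factor peaks at $t-t_0=L-1$ with value $Ke^{L-1}/L$, which diverges as $L\to\infty$. A thicker shell therefore makes the tangential estimate strictly \emph{worse}. What one would actually need is a shell of moderate thickness (say $L\le 1$) started deep enough in the cusp that the image diameter $d(h(v_i),h(\partial\tilde B_i\cap T_6))$ is small, and one would then still have to combine the two directional bounds; as written, your justification is incorrect. The paper's use of the nearest-point projection onto the smaller compact tetrahedron $T_4$ sidesteps this analysis entirely.
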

\begin{proof}
We make use of the set-up and construction from the proof of Theorem~\ref{T:Agol}.  Recall that we constructed a $K$--Lipschitz diffeomorphism $h \colon T_4 \to T_2$ (which determined the map $M_{\Gamma_4} \to M_{\Gamma_2}$).   Let $\pi \colon \mathbb H^3 \to T_4$ denote the ($1$--Lipschitz) closest point projection (see Lemma~\ref{L:exponential contraction}).   Composing with $h$, we obtain a $K$--Lipschitz map we denote $\widetilde f^\circ = h \circ \pi \colon T_6 \to T_2$.

As in Section~\ref{S:Lebesgue numbers}, we note that the preimage in $\mathbb H^3$ of the interior of a $2$--dimensional face $F_i^4$ of $T_4$ under $\pi$ is the interior of one of the regions bounded by hyperbolic planes containing the edges of $F_i^4$ and orthogonal to $F_i^4$.  The preimage of an edge is similarly described as a region bounded by portions of hyperbolic planes containing the edge and portions of planes orthogonal to the edge through the endpoints.  The rest of $\mathbb H^3$ is mapped to the vertices.  From this, and the fact that all dihedral angles of $T_4$ are at most $\frac{\pi}{2}$, it follows that $\pi$ sends the closure of $F_i^6$ to the closure of $F_i^4$, for each $i = 1,2,3,4$.
Furthermore, we see that for each ideal vertex, $\pi$ sends some neighborhood of that vertex to the corresponding vertex of $T_4$.  Composing with $h$, we see that $\widetilde f^\circ$ also has this property.  Furthermore, observe that on the preimage of the interior of $T_2$ (which is exactly the interior of $T_4 \subset T_6$), $\widetilde f^\circ$ is a diffeomorphism and hence
\[ \int_{T_6} \widetilde f^{\circ *} d\vol_{\mathbb H} = \Vol(T_2).\]

Since $\rho \colon \bar \Gamma_6 \to \bar \Gamma_2$ sends reflection in $F_i^6$ to the reflection in $F_i^2$ and $\widetilde f^\circ(\bar F_i^6) = \bar F_i^2$, it follows that we can $\rho$--equivariantly extend $\widetilde f^\circ$ to a map $\widetilde f \colon \mathbb H^3 \to \mathbb H^3$.  As in the right-angled reflection group case, $M_{\Gamma_6}$ and $M_{\Gamma_2}$ are obtained by doubling $T_6$ and $T_2$ over their boundaries, and the descent $f \colon M_{\Gamma_6} \to M_{\Gamma_2}$ of $\widetilde f$ sends each copy of $T_6$ to a copy of $T_2$ by an orientation preserving map and
\[ \int_{M_{\Gamma_6}} f^* d\vol_{\mathbb H} = 2 \Vol(T_2) = \Vol(M_{\Gamma_2}). \]
Finally, we note that since $\widetilde f^\circ$ sends a neighborhood of the each ideal vertex to a vertex of $T_2$, it follows that $f$ sends a neighborhood of each cusp of $M_{\Gamma_6}$ to a point of $M_{\Gamma_2}$. 
\end{proof}

\begin{remark} Note that since $3|6$, Proposition~\ref{P:tetrahedron homs} provides a homomorphism $\rho \colon \bar \Gamma_6 \to \bar \Gamma_3$.  Although we do not carry out the details here, the proof of Theorem~\ref{T:noncompact examples} can be modified so that the same conclusion holds for this homomorphism.  The main point is to replace the linear diffeomorphism $T_4 \to T_2$ with a linear diffeomorphism $T' \to T_3$, where $T'$ is obtained by a $\lambda$--homothety of $T_3$, for $\lambda > 1$, but very close to $1$.   Then the linear map $T' \to T_3$ is a homothety by $1/\lambda$ which is $\frac{1}\lambda$--Lipschitz by Corollary~\ref{C:explicit birkhoff}.  The map $T_6 \to T_3$ is just the composition of the closest point projection $T_6 \to T'$, composed with the $\frac{1}\lambda$--homothety $T' \to T_3$.
\end{remark}

Now suppose that $\Lambda \subset \Gamma_6$ is a torsion free subgroup of index $m < \infty$, and $p \colon M_\Lambda \to M_{\Gamma_6}$ the corresponding orbifold covering map.  Since the covering map is a local isometry, the composition $f \circ p \colon M_\Lambda \to M_{\Gamma_2}$ is $K$--Lipschitz  for some $K < 1$.  Suppose $M_\Lambda$ has $k$ cusps, and let $C_1,\ldots,C_k \subset M_\Lambda$ be torus cusp neighborhoods of the $k$ cusps such that $f \circ p$ is locally constant on $C = C_1 \cup \ldots \cup C_k$ (we choose torus cusps $C_i$ which map into the cusp neighborhoods of $M_{\Gamma_6}$ on which $f$ is locally constant).

Given primitive elements $\alpha_i$ in $\pi_1(C_i)$, we can remove the cusp neighborhood $C_i$ and glue a solid torus $V_i$, identifying the torus boundary of $V_i$ with the exposed torus boundary component of $M_\Lambda - C$ in such a way that the primitive element $\alpha_i$ is trivial in $V_i$.  If we write $\alpha = (\alpha_1,\ldots,\alpha_n)$, then the resulting manifold is called the {\bf $\alpha$--Dehn filling} of $M_\Lambda$, and $\alpha$ is called the {\bf Dehn-filling coefficent}.   Thurston proved that there exists a finite set $A_i \subset \pi_1(C_i)$ so that if for each $i$, $\alpha_i \not \in A_i$, then for $\alpha = (\alpha_1,\ldots,\alpha_n)$, the $\alpha$--Dehn filling of $M_\Lambda$ is again a hyperbolic $3$--manifold, which we denote $M_{\Lambda_\alpha}$.  Note that the inclusion $M_\Lambda-C \to M_\Lambda$ induces an isomorphism on fundamental groups, while the inclusion $M_\Lambda - C \to M_{\Lambda_\alpha}$ induces a surjection on fundamental groups.  Thus we have a surjective homomorphism $\Lambda \to \Lambda_\alpha$.  Indeed, this homomorphism is simply the quotient homomorphism by the normal closure of $\alpha_1,\ldots,\alpha_k$.  More is true, and we state here a version that we will be most useful for us; see \cite{Thurston-notes,NeumannZagier,HodgsonKerckhoff}

\begin{theorem} [Thurston] \label{T:Dehn filling}  With the notation as above, suppose that
\[ \{\alpha^n = (\alpha_1^n,\ldots,\alpha_k^n)\}_{n=1}^\infty \]
is any sequence of Dehn fillings coefficients such that for each $i$, $\{\alpha_i^n\}_{n=1}^\infty$ are all distinct.  Then for any $\epsilon > 0$, there exists $N > 0$ so that for all $n \geq N$, the inclusion $M_\Lambda - C$ into $M_{\Lambda_\alpha}$ is a $(1+\epsilon)$--biLipschitz embedding.
\end{theorem}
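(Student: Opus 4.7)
The plan is to deduce the statement from Thurston's hyperbolic Dehn surgery theorem in its geometric convergence formulation. First, since each sequence $\{\alpha_i^n\}_{n=1}^\infty$ consists of distinct primitive elements of the rank-two abelian group $\pi_1(C_i)$, it must leave every finite subset; in particular, for $n$ large enough all the $\alpha_i^n$ avoid Thurston's finite exceptional set $A_i$, and the filled manifold $M_{\Lambda_{\alpha^n}}$ admits a complete hyperbolic structure. Equivalently, the length of $\alpha_i^n$ measured on any fixed horocusp neighborhood tends to infinity in each cusp as $n \to \infty$, which is precisely the regime in which Thurston's deformation theory produces complete hyperbolic structures on $M_{\Lambda_{\alpha^n}}$ close to the cusped structure on $M_\Lambda$.

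Second, I would fix a basepoint $x_0 \in M_\Lambda - C$ and invoke the geometric convergence form of Thurston's theorem: the pointed hyperbolic manifolds $(M_{\Lambda_{\alpha^n}}, x_0)$ converge geometrically to $(M_\Lambda, x_0)$.  By definition, this yields an exhausting sequence of compact neighborhoods $K_n \subset M_\Lambda$ of $x_0$ and $(1+\epsilon_n)$--biLipschitz embeddings $\Phi_n \colon K_n \to M_{\Lambda_{\alpha^n}}$ with $\epsilon_n \to 0$. Since $M_\Lambda - C$ is compact and connected with $x_0 \in M_\Lambda - C$, one can choose $N$ so large that $M_\Lambda - C \subset K_n$ and $\epsilon_n < \epsilon$ for all $n \geq N$; the restriction $\Phi_n|_{M_\Lambda - C}$ is then a $(1+\epsilon)$--biLipschitz embedding.

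The main technical point, and in my view the main obstacle, is identifying the abstract biLipschitz embedding $\Phi_n|_{M_\Lambda - C}$ with the topological inclusion $M_\Lambda - C \hookrightarrow M_{\Lambda_{\alpha^n}}$ that the statement refers to. This is standard but requires unpacking the proof of Thurston's theorem: the hyperbolic structure on $M_{\Lambda_{\alpha^n}}$ is constructed as a small deformation of an incomplete hyperbolic structure living on the topological manifold $M_\Lambda - C$, extended across the filling solid tori, and so the biLipschitz map from geometric convergence is tautologically close to the identity on $M_\Lambda - C$ and is isotopic to the natural inclusion through maps whose biLipschitz distortion tends to $1$. After a negligible adjustment, the natural inclusion itself is $(1+\epsilon)$--biLipschitz; this identification is well-documented in the references \cite{Thurston-notes,NeumannZagier,HodgsonKerckhoff} cited just before the theorem statement.
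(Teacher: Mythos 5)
The paper does not actually prove this statement: Theorem~\ref{T:Dehn filling} is quoted as a version of Thurston's hyperbolic Dehn filling theorem and deferred entirely to the cited references \cite{Thurston-notes,NeumannZagier,HodgsonKerckhoff}, so there is no argument in the text to compare against. Taken on its own terms, your sketch is a correct and reasonable derivation, and you have correctly identified the one genuinely nontrivial point: abstract pointed geometric (Gromov--Hausdorff) convergence of $(M_{\Lambda_{\alpha^n}}, x_0)$ to $(M_\Lambda, x_0)$ produces \emph{some} $(1+\epsilon_n)$--biLipschitz embeddings $\Phi_n$ of large compacta, but the theorem asserts that the \emph{specific} topological inclusion $M_\Lambda - C \hookrightarrow M_{\Lambda_{\alpha^n}}$ is such an embedding, and the passage from one to the other must be justified.

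One remark: your route through abstract geometric convergence and then a post-hoc identification with the inclusion is somewhat indirect, and the identification step as you phrase it (``tautologically close to the identity'' and ``isotopic to the inclusion through maps of distortion tending to $1$'') is an assertion rather than an argument. The cleaner and more standard path, and the one the cited references actually implement, is to observe that Thurston constructs the hyperbolic metric on $M_{\Lambda_{\alpha^n}}$ as the completion of a nearby \emph{incomplete} hyperbolic structure on the underlying open manifold $\operatorname{int}(M_\Lambda)$; the associated holonomy representations converge to that of the complete cusped structure, and by the compactness of $M_\Lambda - C$ the developing maps (hence the Riemannian metrics) converge in $C^\infty$ on $M_\Lambda - C$. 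In this formulation the two metrics live on the same underlying set $M_\Lambda - C$, so the biLipschitz bound on the inclusion is read off directly from $C^1$--closeness of the metric tensors, with no need to first produce an abstract biLipschitz map and then match it to the inclusion. Your first step (distinct $\alpha_i^n$ in a rank-two lattice eventually leave any finite set, hence escape the exceptional sets $A_i$ and have normalized lengths tending to infinity) and the final $N$--threshold argument are both correct as written.
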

Here we note that $M_\Lambda - C$ is given the induced path metric from $M_\Lambda$ on the domain, and the induced path metric from $M_{\Lambda_\alpha}$ on the target.

From this, and Theorem~\ref{T:noncompact examples}, we easily obtain the following.

\begin{theorem} \label{T:dehn filling examples} Suppose that $\Lambda< \Gamma_6$ is a torsion-free subgroup of index $m < \infty$, and 
\[ \{\alpha^n = (\alpha_1^n,\ldots,\alpha_k^n)\}_{n=1}^\infty \]
is any sequence of Dehn fillings coefficients for $M_\Lambda$ such that for each $i$, $\{\alpha_i^n\}_{n=1}^\infty$ are all distinct.  Then there exists $K' < 1$ and $N > 0$ so that for all $n \geq N$, there exists a degree $m$, $K'$--Lipschitz map $f_n \colon M_{\Lambda_{\alpha_n}} \to M_{\Gamma_2}$.  If we write $\rho_n = f_{n*} \colon \Lambda_{\alpha_n} \to \Gamma_2$ and $j_n \colon \Lambda_{\alpha_n} \to \SO_0(3,1)$ for the inclusion, then
\[ \Vol(j_n \times \rho_n) = {\bf V}_3(\vol(M_{\Lambda_{\alpha_n}}) - m \vol(M_{\Gamma_2})). \]
\end{theorem}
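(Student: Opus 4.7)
The plan is to combine the noncompact Lipschitz map from Theorem~\ref{T:noncompact examples} with Thurston's Dehn filling deformation in Theorem~\ref{T:Dehn filling}. Let $g := f \circ p \colon M_\Lambda \to M_{\Gamma_2}$, where $f$ is the $K$--Lipschitz map from Theorem~\ref{T:noncompact examples} (with $K < 1$) and $p \colon M_\Lambda \to M_{\Gamma_6}$ is the orbifold covering of degree $m$. Since $p$ is a local isometry, $g$ is $K$--Lipschitz, and by the choice of $C = C_1 \cup \cdots \cup C_k$, the map $g$ is constantly equal to some $q_i \in M_{\Gamma_2}$ on each $C_i$.

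Fix $\epsilon > 0$ with $K' := K(1+\epsilon) < 1$; Theorem~\ref{T:Dehn filling} supplies $N$ so that for $n \geq N$, $M_{\Lambda_{\alpha^n}}$ is hyperbolic and the inclusion $\iota_n \colon M_\Lambda - C \hookrightarrow M_{\Lambda_{\alpha^n}}$ is $(1+\epsilon)$--biLipschitz. Define $f_n \colon M_{\Lambda_{\alpha^n}} \to M_{\Gamma_2}$ by requiring $f_n \circ \iota_n = g|_{M_\Lambda - C}$ and by setting $f_n \equiv q_i$ on the solid torus $V_i^n$ filling the $i$--th cusp. Continuity along $\partial V_i^n$ holds because $g$ already equals $q_i$ on $\partial C_i$. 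Given $x, y \in M_{\Lambda_{\alpha^n}}$, subdivide a shortest path between them at the boundaries of the $V_i^n$: on each subpath inside some $V_i^n$, $f_n$ is constant and contributes $0$ to $d(f_n(x), f_n(y))$, while each subpath inside $M_\Lambda - C$ has its image length controlled by the $K$--Lipschitz property of $g$ composed with the $(1+\epsilon)$--biLipschitz estimate from $\iota_n$. Summing over subpaths, $f_n$ is $K'$--Lipschitz.

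For the degree and volume, because $f_n$ is constant on each $V_i^n$ and $g$ is constant on $C$,
\[ \int_{M_{\Lambda_{\alpha^n}}} f_n^* d\vol_{\mathbb H} = \int_{M_\Lambda} g^* d\vol_{\mathbb H} = m \int_{M_{\Gamma_6}} f^* d\vol_{\mathbb H} = m\,\vol(M_{\Gamma_2}), \]
using that $p$ is a degree $m$ local isometry and Theorem~\ref{T:noncompact examples} for the final equality. Hence $\deg(f_n) = m$ and $\Vol(\rho_n) = m\,\vol(M_{\Gamma_2})$, where $\rho_n := f_{n*}$. Theorem~\ref{T:G-K} identifies $\rho_n$ as strictly dominated by $j_n$, and Theorem~\ref{T:Tholozan volume} (with $d=3$, so $(-1)^d = -1$) then yields the asserted volume formula. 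The one step demanding care is the Lipschitz estimate across the ``seam'' $\partial V_i^n$: this hinges on $g$ being genuinely constant (not merely Lipschitz) on each cusp neighborhood, so that the constant extension to $V_i^n$ glues continuously and respects the path--length argument.
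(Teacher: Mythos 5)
Your proposal is correct and follows essentially the same route as the paper: extend $g = f\circ p$ by constants over the filling solid tori, use Thurston's Dehn filling theorem to control the Lipschitz constant on the compact core, observe that the constancy of $g$ on the cusps makes the gluing continuous, compute the degree via the volume integral, and conclude the formula from Theorem~\ref{T:Tholozan volume}. One small clarification: Theorem~\ref{T:G-K} is not what shows $\rho_n$ is strictly dominated by $j_n$ -- that follows directly from the definition of strict domination once you lift $f_n$ to a $(j_n,\rho_n)$--equivariant $K'$--Lipschitz map on $\mathbb H^3$ (Theorem~\ref{T:G-K} then gives proper discontinuity, but Theorem~\ref{T:Tholozan volume} only needs the domination hypothesis); this is a cosmetic point, and otherwise your path-subdivision argument is a correct and slightly more explicit version of the paper's Lipschitz estimate (the paper fixes the biLipschitz constant to $\tfrac{2}{K+1}$, yielding $K' = \tfrac{2K}{K+1}$, whereas you use a generic $\epsilon$ with $K' = K(1+\epsilon)$ -- equivalent).
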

\begin{proof}  Let $f \colon M_{\Gamma_6} \to M_{\Gamma_2}$ be the $K$--Lipschitz map from Theorem~\ref{T:noncompact examples},  $p \colon M_\Lambda \to M_{\Gamma_6}$ denote the covering map, and $f \circ p \colon M_\Lambda \to M_{\Gamma_2}$ the resulting $K$--Lipschitz composition.  Let $C_1,\ldots,C_k$ denote the cusp neighborhoods on which $f\circ p$ is locally constant, $C = C_1 \cup \cdots \cup C_k$, and let $N > 0$ from  Theorem~\ref{T:Dehn filling} be such that the inclusion $M_\Lambda - C \to M_{\Lambda_{\alpha_n}}$ is $\frac{2}{K+1}$--biLipschitz for all $n \geq N$. (Note that since $0 < K < 1$, we have $\frac{2}{K+1} > 1$).

Since $f \circ p$ is constant on each $C_i$, the boundary of each $C_i$ is sent to a point.  Consequently, we can extend this map (by a constant) over any filling solid torus and denote it $f_n$:
\[ \xymatrix{ M_\Lambda - C \ar[r] \ar[d] & M_{\Gamma_2} \\ M_{\Lambda_{\alpha_n}} \ar[ru]_{f_n}}
\]
On the complement of the filling solid tori, $f_n$ is the composition of the inverse of the $\frac{2}{K+1}$--biLipschitz map and the $K$--Lipschitz map, and is consequently $K'$--Lipschitz, where $K' = \frac{2K}{K+1} < 1$.  On the filling solid tori, $f_n$ is constant, so it is obvious $K'$--Lipschitz.  Thus, $f_n$ is $K'$--Lipschitz, as required.

Since $f \circ p$ is constant on each $C_i$, we have
\[ \int_{M_\Lambda - C} (f \circ p)^* d\vol_{\mathbb H} =\int_{M_\Lambda} (f \circ p)^* d\vol_{\mathbb H}  = m \int_{M_{\Gamma_6}} f^* d\vol_{\mathbb H} = m \vol(M_{\Gamma_2}).\]
From this and since $f_n$ is constant on the solid tori, we have
\[ \int_{M_{\Lambda_{\alpha_n}}} f_n^* d \vol_{\mathbb H} =  \int_{M_\Lambda - C} (f \circ p)^* d\vol_{\mathbb H} = m \vol(M_{\Gamma_2}).\]
Thus $f_n$ has degree $m$, and the volume expression follows from Theorem~\ref{T:Tholozan volume}.
\end{proof}

\bibliographystyle{amsplain}

\providecommand{\bysame}{\leavevmode\hbox to3em{\hrulefill}\thinspace}
\providecommand{\MR}{\relax\ifhmode\unskip\space\fi MR }
\providecommand{\MRhref}[2]{%
  \href{http://www.ams.org/mathscinet-getitem?mr=#1}{#2}
}
\providecommand{\href}[2]{#2}


\noindent Department of Mathematics and Computer Science,\\
Eastern Illinois University,\\
600 Lincoln Avenue,\\
Charleston, IL 61920.\\
Email: gslakeland@eiu.edu

\noindent Department of Mathematics,\\
University of Illinois at Urbana-Champaign,\\
1409 W. Green St,\\
Urbana, IL 61801.\\
Email: clein@math.uiuc.edu

\end{document}